\newcommand{\field}[1]{\mathbb{#1}}
\newcommand{\N}{\field{N}}
\newcommand{\Z}{\field{Z}}
\newcommand{\R}{\field{R}}
\newcommand{\bea}{\begin{eqnarray}}
\newcommand{\eea}{\end{eqnarray}}
\newcommand{\be}{\begin {equation}}
\newcommand{\ee}{\end{equation}}
\renewcommand{\(}{\left(}
\renewcommand{\)}{\right)}
\numberwithin{equation}{section}
\newtheorem{theorem}{\textbf{Theorem}}
\numberwithin{theorem}{section}
\newtheorem{lemma}[theorem]{\textbf{Lemma}}
\newtheorem{proposition}[theorem]{\textbf{Proposition}}
\newtheorem{corollary}[theorem]{\textbf{Corollary}}
\newtheorem{example}{\textbf{Example}}
\newtheorem{remark}{\textbf{Remark}}
\renewenvironment{proof}[1][Proof]{\begin{trivlist}
\item[\hskip \labelsep {\bfseries #1:}]}{\qed\end{trivlist}}
\begin{document}
\title[Bloch-Okounkov $n$-point functions]{On the Fourier expansion of Bloch-Okounkov $n$-point function}

\author{Kathrin Bringmann} 
\address{Mathematical Institute\\University of
Cologne\\ Weyertal 86-90 \\ 50931 Cologne \\Germany}
\email{kbringma@math.uni-koeln.de}
\author{Antun Milas}
\address{Department of Mathematics and Statistics \\ SUNY-Albany \\ Albany, NY 12222, USA}
\email{amilas@albany.edu}
\date{\today}
\thanks{K.B. was supported by the Alfried Krupp Prize for Young University Teachers of the Krupp foundation and the research leading to these results has received funding from the European Research Council under the European Union's Seventh Framework Programme (FP/2007-2013) / ERC Grant agreement n. 335220 - AQSER}
\thanks{A.M. was partially supported by a Simons Foundation grant}

\begin{abstract} 
In this paper, we study algebraic and analytic properties of Fourier coefficients, expressed as $q$-series, of the so-called Bloch-Okounkov $n$-point function.
We prove several results about these series and explain how they relate to Rogers' false theta function.
Then we obtain their full asymptotics, as $\tau \rightarrow 0$, and use this result to derive asymptotic properties of the coefficients in the $q$-expansion. 
At the end, we also introduce and discuss higher rank generalization of Bloch-Okounkov's functions.

\end{abstract}

\maketitle

\section{Introduction and statement of results}
In \cite{BO}, S. Bloch and A. Okounkov introduced certain formal series $F\(t_1,..,t_n\)$ (defined in Section 2), depending also on $q:=e^{2\pi i\tau}$ ($\tau \in \mathbb{H}$),  in connection with the representation theory of the Lie algebra of differential operators on the circle and quasi-modular forms. 
Somewhat unexpectedly, they showed that $F\(t_1,...,t_n\)$ $(n\in \N)$, which they called the \emph{$n$-point function}, has a meromorphic extension with possible poles at divisors $q^m t_{j_1} \cdots t_{j_k}=1$, where $\{j_1,...,j_k \} \subset \{1,...,n\}$ and $m \in \mathbb{N}$. Among other things, Bloch and Okounkov  proved the following beautiful result.
\begin{theorem} \label{bo-main}
We have
$$F\(t_1,...,t_n\)=\sum_{\sigma \in S_n} \frac{{\rm det}\left(\frac{\Theta^{\(k-j+1\)}\(t_{\sigma\(1\)},...,t_{\sigma\(n-k\)}\)}{\(k-j+1\)!}\right)_{j, k=1}^n}{\Theta\(t_{\sigma\(1\)}\) \Theta\(t_{\sigma\(1\)}t_{\sigma\(2\)}\) \cdots \Theta\(t_{\sigma\(1\)} t_{\sigma\(2\)} \cdots t_{\sigma\(n\)}\)},$$
where $S_n$ is the symmetric group on $n$ letters,
$$\Theta\(t\):=\sum_{\ell\in \mathbb{Z}} \(-1\)^\ell q^{\frac12 \left(\ell+\frac{1}{2}\right)^2} t^{\ell+\frac{1}{2}},$$
and where $\Theta^{(k)}(t):=(t \frac{d}{dt})^k \Theta(t)$. In particular 
$$F(t)=\frac{1}{\Theta(t)}.$$
\end{theorem}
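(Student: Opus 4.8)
The plan is to recognize $F$ as a correlation function of the determinantal point process on the integers underlying the infinite-wedge representation, and to evaluate it by the free-fermion (Wick) calculus. Recall from Section~2 that $F(t_1,\dots,t_n)$ is built from the diagonal operators $\varphi(\,\cdot\,,t_i)$ on the basis $\{v_\lambda\}$ of partitions of the charge-zero subspace of the semi-infinite wedge; in terms of the fermion modes $\psi_k,\psi_k^*$ ($k\in\Z+\tfrac12$), with $q^{H}v_\lambda=q^{|\lambda|}v_\lambda$ and
\[
\mathcal{A}(t):=\sum_{k\in\Z+\frac12}t^{k}\,{:}\psi_k\psi_{-k}^*{:}
\]
the operator realizing $\varphi$, one has $F(t_1,\dots,t_n)=\operatorname{Tr}\big(q^{H}\mathcal{A}(t_1)\cdots\mathcal{A}(t_n)\big)\big/\operatorname{Tr}\big(q^{H}\big)$. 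The first input I would assemble is the elementary thermal two-point function $\operatorname{Tr}(q^{H}\psi(z)\psi^*(w))/\operatorname{Tr}(q^{H})$: resumming the Fermi--Dirac geometric series identifies it with the genus-one Szeg\H{o} kernel, a ratio of the function $\Theta$ in the statement to its logarithmic derivative, which is in essence Jacobi's triple product for $\Theta$; already for $n=1$ this yields $F(t)=1/\Theta(t)$.

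For general $n$ I would apply the finite-temperature Wick theorem to $\operatorname{Tr}(q^{H}\mathcal{A}(t_1)\cdots\mathcal{A}(t_n))$. Since each $\mathcal{A}(t_i)$ is a fermion bilinear, the trace expands over all ways of contracting every $\psi^*$ with a $\psi$; the nonzero terms are indexed by a permutation of $\{1,\dots,n\}$ (the $\psi^*$ from the $i$-th factor paired to the $\psi$ from its image), contributing a sign times a product of thermal propagators along the cycles. Two features of $\mathcal{A}$ drive the shape of the answer: it is \emph{diagonal} (only $\psi_k\psi^*_{-k}$ occurs), so along a given cycle the mode indices are forced equal and the $t_i$'s on that cycle enter only through their product --- this is the source of the nested arguments $t_{\sigma(1)}t_{\sigma(2)}\cdots$ in the statement; and the ``$\mathbf 1^2=\mathbf 1$'' collisions among the occupation operators collapse certain subproducts of $\mathcal{A}$'s, which, in the language of the determinantal process, replaces a determinant of the $\Theta$-kernel at coincident spectral parameters by its confluent version --- this is the source of the derivatives $\Theta^{(k-j+1)}$ and, after Cramer's rule, of the Hessenberg-type determinant $\det\big(\Theta^{(k-j+1)}(\,\cdot\,)/(k-j+1)!\big)_{j,k=1}^n$ in the numerator.

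The step I expect to be the genuine obstacle is the combinatorial reorganization: converting the raw Wick expansion --- indexed by pairings, their cycle structure, and the relative order of insertions inside each cycle --- into the advertised single sum over all of $S_n$ of one determinant over nested $\Theta$-values. This requires (i) a Clavelli--Shapiro-type identity for cyclic thermal traces to unfold the cycle contributions into the ordered ``prefix-product'' pattern $t_{\sigma(1)},\,t_{\sigma(1)}t_{\sigma(2)},\dots$, and (ii) an addition (Fay--Weierstrass-type) identity for $\Theta$ to merge contributions and recognize the Taylor coefficients $\Theta^{(m)}/m!$; checking that all the signs and normalizations assemble into exactly the stated determinant is where the bookkeeping is heaviest. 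A secondary technical point is the regularization built into $\varphi(\lambda,t)$ in Section~2 (subtraction of the empty-partition ``Dirac-sea'' term): this shifts the naive propagators by constants that are precisely what normalizes the final answer to $\Theta$ as written, and these constants must be tracked consistently throughout.

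Should the fermionic bookkeeping become unwieldy, I would fall back on bosonization: express $\psi(z)\psi^*(w)$ through charge $\pm1$ vertex operators, so that $F$ becomes a trace of a product of vertex operators on the bosonic Fock space, evaluate that by the standard torus-correlator formula --- which manifestly outputs quotients of $\Theta$ in the prefix-product shape --- and recover the derivatives from the confluent limit where the insertion points coincide. This trades fermionic Wick combinatorics for the (also nontrivial) normal-ordering and zero-mode bookkeeping of vertex operators, but the two computations are mirror images and either should deliver the stated identity.
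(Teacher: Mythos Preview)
The paper does not prove Theorem~\ref{bo-main}; it is quoted as a result of Bloch and Okounkov \cite{BO} (introduced with ``Bloch and Okounkov proved the following beautiful result'') and serves only as background and motivation. There is therefore no proof in the paper to compare your proposal against.

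Your sketch is a recognizable outline of the original Bloch--Okounkov argument and its relatives (thermal Wick calculus on the infinite wedge, Szeg\H{o}-kernel propagators, bosonization as an alternative), and it correctly identifies where the real work lies --- the combinatorial reorganization of the Wick expansion into the stated determinantal form. But as a self-contained proof it remains a plan rather than an argument: the crucial steps (i) and (ii), the Clavelli--Shapiro unfolding and the Fay-type identity that produce the nested $\Theta$-denominators and the confluent determinant, are asserted rather than carried out, and the sign/normalization bookkeeping you flag as ``heaviest'' is exactly what distinguishes a proof from a heuristic. If you want to supply a proof here, you should either reproduce the inductive recursion of \cite{BO} (their key lemma expresses $F(t_1,\dots,t_n)$ in terms of $F(t_1,\dots,t_{n-1})$ and its derivatives, from which the determinant follows by a Hessenberg expansion), or cite \cite{BO} and, for the alternative viewpoint, \cite{M,M3} where the constant-term formula (\ref{modular}) is derived via vertex-operator correlators.
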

In the second author's PhD thesis (Part 2; published as \cite{M}, see also \cite{M3}), this $n$-point function was  reformulated in the language of vertex algebras and  generalized for an arbitrary insertion of vectors.  
This generalization is based on a simple observation: the $n$-point function above is 
obtained from a $2n$-point correlation function on the torus in conformal field theory (or vertex algebra theory), {\em after} we integrate out
$n$ of the variables. More precisely, it was shown in \cite{M}  that $F(t_1,...,t_n)$ is the constant term of the following elegant quotient 
\begin{equation} \label{modular}
{\frac{\displaystyle{\prod_{1 \leq j < k \leq n} \Theta\left(\frac{t_k x_k}{t_j x_j}\right) \Theta\left(\frac{x_k}{x_j}\right)}}{\displaystyle{{\prod_{j=1}^n \Theta(t_j)} \prod_{1 \leq j < k \leq n} \Theta\left(\frac{t_k x_k}{x_j}\right) \Theta\left(\frac{x_k}{t_j x_j}\right)}}},
\end{equation} 
taken with respect to the $x$-variables

Because we are only considering a ``slice'' of the correlation function,  Bloch-Okounkov functions cannot be elliptic with respect to  $u_j \mapsto u_j+\tau$ if we write $t_j:=e^{2 \pi i u_j}$. Instead, a very complicated $q$-difference 
equation holds, which, in a way, reflects the complexity of the determinant part in Theorem \ref{bo-main}.
The relevant vertex
(super)algebra for $F(t_1,...,t_n)$ is the infinite-wedge fermionic space \cite{Kac}.
There are other interpretations of $n$-point functions closely related to random partitions which we do not pursue here (instead, see \cite{EO,O}).

In this paper, we investigate algebraic and analytic properties of the Fourier coefficients
\begin{equation} \label{coeff}
 {\rm Coeff}_{t^{r_1+\frac12}_1 \cdots t^{r_n+\frac12}_n} F(t_1,...,t_n),
 \end{equation}
where the $r_j$ are integers. 
There are at least two reasons for studying these objects. Firstly, beyond $n$-point correlation functions on the torus, much 
less is known about traces of products of modes of vertex operators which we study here. 
Another reason is recent increased interest in meromorphic Jacobi forms of negative index, including their Fourier coefficients (see \cite{BCR} and references therein). For example, the negative index Jacobi forms $\frac{1}{\Theta(t)^\ell}$ ($\ell \in \mathbb{N}$) studied in \cite{BCR} also appear in this paper as (generalized) higher-level Bloch-Okounkov $1$-point  functions.  The $n$-point functions $F(t_1,...,t_n)$ ($n \geq 2$) are more subtle to describe in terms of Jacobi forms.  From Theorem \ref{bo-main}, we can at least infer that $$F(t_1,...,t_n)=\frac{1}{\Theta(t_1 \cdots t_n)}  \widetilde{F}(t_1,...,t_n),$$ where $\widetilde{F}(t_1,...,t_n)$ 
 is a linear combination of functions of  non-negative ``weight'' $\geq 0$, where the ``weight''  of $\frac{\Theta^{(k)}(\cdot )}{\Theta(\cdot )}$ is set to be $k$ and where 
 empty slots stand for products of $t_j$'s.  Alternatively, we can also view (\ref{coeff}) as a term in the Fourier expansion of (\ref{modular}), which is clearly 
 a quotient of products of Jacobi forms. 
 
 Except for a few examples (implicitly) studied in \cite{M},  
 we know very little about Fourier coefficients of generalized Bloch-Okounkov functions for $\ell \geq 2$ and $n \geq 2$.

In this paper, we establish two main results. We first give a fairly useful description of these coefficients in terms of Rogers false theta function (cf. \cite{R})
$$\sum_{\ell\geq 0} (-1)^\ell q^{\frac{\ell(\ell+1)}{2}}.$$
\begin{theorem} \label{intro-main-formula}
Let
$ r_1 > r_2 > \cdots > r_m \geq 0$ and  $ s_{m+1} >  \cdots > s_n \geq 0$ be sequences of non-negative integers such that  $-r_j-1/2 \neq s_k+1/2$ for all $1 \leq j \leq m$ and $m+1 \leq k \leq n$. Let $G_{{\bf r},{\bf s}}$ $({\bf r}:= (r_1, \dots, r_m), {\bf s}:= (s_{m+1}, \dots s_n))$ be the Fourier coefficient as defined in Section 2.5.
Then the series
$$\prod_{\substack{1 \leq j \leq m \\ m+1 \leq k \leq n}} \left(1-q^{r_j+s_k+1}\right)  \prod_{1 \leq k < \ell \leq m} \left(1-q^{r_k -r_\ell}\right)  \prod_{m+1 \leq k < \ell \leq n} \left(1-q^{s_k -s_\ell}\right)  G_{{\bf r},{\bf s}}(q),$$
can be written as $$P(q)\sum_{\ell \geq 0}(-1)^{\ell}q^{\frac{\ell(\ell+1)}{2}}+ Q(q)$$ for unique Laurent polynomials $P$ and $Q$.
\end{theorem}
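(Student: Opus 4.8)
The plan is to reduce the computation of $G_{\mathbf r,\mathbf s}$ to a single scalar building block and then to control the denominators that arise. Starting from Theorem~\ref{bo-main} (equivalently from the constant-term representation \eqref{modular}) together with the formula for $G_{\mathbf r,\mathbf s}$ set up in Section~2, the crucial preliminary fact is the shape of the Fourier coefficients of $1/\Theta$. By the Jacobi triple product, $\Theta(t)=-q^{1/8}t^{-1/2}(q;q)_\infty(t;q)_\infty(q/t;q)_\infty$, so $1/\Theta$ is a meromorphic Jacobi form whose poles are all simple and lie in $t\in q^{\Z}$. Expanding the elliptic part in a fixed annulus and extracting the coefficient of $t^{r+1/2}$ replaces the lattice sum by a half sum; after dividing out the resulting power of $(q;q)_\infty$ one is left with a Laurent monomial in $q$ times $\sum_{\ell\ge0}(-1)^\ell q^{\ell(\ell+1)/2}$, up to an explicit Laurent polynomial coming from the finitely many ``boundary'' terms. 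This is the only mechanism by which Rogers' false theta function can appear, and the whole thrust of the proof is that every other ingredient contributes only Laurent polynomials over products of factors of the form $1-q^a$.

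I would then treat the $n$-point case in the partial-product coordinates $y_j:=t_{\sigma(1)}\cdots t_{\sigma(j)}$ for each $\sigma\in S_n$ (with $y_0:=1$): here the denominator $\Theta(t_{\sigma(1)})\Theta(t_{\sigma(1)}t_{\sigma(2)})\cdots$ becomes the fully factored $\Theta(y_1)\cdots\Theta(y_n)$, and since the change of variables is monomial with unimodular exponent matrix, the requested coefficient in the $t_j$ translates into a coefficient in the $y_j$ up to an explicit monomial Jacobian. Expanding the determinant of Theorem~\ref{bo-main} writes the numerator as a signed sum of products of terms $\Theta^{(c)}(y_i)$, each an entire function of a single $y_i$, so that within each summand the coefficient extraction decouples variable by variable. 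Each factor then reduces either to a coefficient of $1/\Theta$ — the false theta building block above, accompanied by geometric-series denominators — or to a coefficient of $\Theta^{(c)}(y)/\Theta(y)$ or of an entire theta series; and the substantive point is that, after summing over $\sigma$ and over the determinant expansion, these remaining contributions collapse into Laurent polynomials over products of the $1-q^a$ (in particular introducing no Eisenstein-type or other non-rational series).

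Granting this, $G_{\mathbf r,\mathbf s}(q)$ is a finite $\C(q)$-linear combination of one-sided sums $q^b\sum_{\ell\ge N}(-1)^\ell q^{(\ell+a)(\ell+a+1)/2}$ with $a,b,N\in\Z$ (the symmetry $\ell\mapsto-\ell-2a-1$ of the exponent turns ``$\ell\le N$'' sums into sums of this type). The annulus-dependent splittings in the coefficient extraction produce precisely the denominators $1-q^{r_j+s_k+1}$, $1-q^{r_k-r_\ell}$ and $1-q^{s_k-s_\ell}$ — a Cauchy-type product — and the hypotheses $r_1>\cdots>r_m\ge0$, $s_{m+1}>\cdots>s_n\ge0$ and $-r_j-\tfrac12\ne s_k+\tfrac12$ are exactly what guarantees that none of these exponents vanishes, so multiplying by the stated product clears every denominator. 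Finally, reindexing each surviving sum by $m=\ell+a$ and peeling off the finitely many terms with $m$ below $\max(0,N+a)$ rewrites it as a Laurent monomial times $\pm\sum_{\ell\ge0}(-1)^\ell q^{\ell(\ell+1)/2}$ plus a Laurent polynomial, which yields the asserted form $P(q)\sum_{\ell\ge0}(-1)^\ell q^{\ell(\ell+1)/2}+Q(q)$ with $P,Q\in\C[q,q^{-1}]$. For uniqueness, if $P_1 f+Q_1=P_2 f+Q_2$ with $f:=\sum_{\ell\ge0}(-1)^\ell q^{\ell(\ell+1)/2}$, then $(P_1-P_2)f=Q_2-Q_1$; were $P_1\ne P_2$ this would make $f$ a rational function of $q$, so its power-series coefficients would obey an eventual constant-coefficient linear recursion and could not exhibit gaps of unbounded length — contradicting that the nonzero coefficients of $f$ occur exactly at the triangular numbers. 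Hence $P_1=P_2$ and then $Q_1=Q_2$.

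The main obstacle is the second step: the coefficient extraction must be carried out inside a single common annulus so that the decoupling over the $y_j$ is valid, and then one has to check that the denominators produced are \emph{exactly} the asserted Cauchy-type product and — the genuinely delicate point — that the combined contribution of the $\Theta^{(c)}(y)/\Theta(y)$ and entire-theta factors is actually a Laurent polynomial over that product, i.e. that all higher ``weight'' pieces either cancel in the sum over $S_n$ or become rational once the prescribed coefficient is taken. The ordering hypotheses on $\mathbf r$ and $\mathbf s$ are precisely what allow a consistent choice of annulus, and the product of $1-q^\bullet$ factors is the shadow of that choice.
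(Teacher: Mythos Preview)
Your route through Theorem~\ref{bo-main} is genuinely different from the paper's, and the gap you yourself flag is real and not easily closed. The ``genuinely delicate point''---that the contributions from $\Theta^{(c)}(y)/\Theta(y)$ for $c\ge 1$ collapse, after the sum over $S_n$ and the determinant expansion, into Laurent polynomials over the stated product of $(1-q^\bullet)$ factors---is not a technicality. The Fourier coefficients of $\Theta^{(c)}/\Theta$ are governed by quasi-modular forms (Eisenstein-type series), and you give no mechanism for why these cancel or become rational. Your sentence ``Granting this, \dots'' is where the proof actually lies. Likewise, your assertion that the denominators produced are \emph{exactly} the Cauchy product $\prod(1-q^{r_j+s_k+1})\prod(1-q^{r_k-r_\ell})\prod(1-q^{s_k-s_\ell})$ is not justified: with $n!$ permutations $\sigma$ each determining its own partial-product coordinates $y_j^{(\sigma)}$, and with the numerator contributing mixed factors $\Theta^{(c)}(y_i)$, it is not clear a priori why no extra $(1-q^a)$ factors survive.

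The paper avoids all of this by not touching Theorem~\ref{bo-main} at all. It starts from the constant-term formula (Theorem~2.7) and its $q$-series expansion, namely
\[
G_{\mathbf r,\mathbf s}(q)=\sum_{a_j\ge 0}(-1)^{\sum a_j}
q^{\frac12\left(\sum_{k\le m}a_k+m-\sum_{k>m}a_k\right)^2+\sum_{j\le m}(a_j+1)(r_j+\tfrac12)+\sum_{j>m}a_j(s_j+\tfrac12)},
\]
passes to partial-sum variables $\lambda_j=a_1+\cdots+a_j$ and $\mu_j=a_{m+1}+\cdots+a_{m+j}$, and sums the innermost variable as a finite geometric series. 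This produces a two-term recursion
\[
F_{a,j,k}(r_1,\dots)=\frac{q^{r_1+\frac12}}{1-q^{r_1-r_2}}\bigl(F_{a,j-1,k}(r_2,\dots)-F_{a,j-1,k}(r_1,r_3,\dots)\bigr),
\]
and an analogous one in the $s$-variables, which \emph{manufactures} the denominators $(1-q^{r_k-r_\ell})$ and $(1-q^{s_k-s_\ell})$ directly. Iterating down to $j=k=1$ leaves a sum of two-variable kernels $G(r,s;\tau)=\sum_{k,\ell\ge 0}(-1)^{k+\ell}q^{\frac12(k-\ell+m)^2+k(r+\frac12)+\ell(s+\frac12)}$, each of which, by the explicit $n=2$ computation, contributes a single further denominator $(1-q^{r+s+1})$ and then reduces to the claimed false-theta shape by shifting and truncating. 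No quasi-modular input, no cancellation over $S_n$, no analysis of $\Theta^{(c)}/\Theta$.

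Your uniqueness argument is fine and matches the paper's remark. But for existence, the approach you propose would need an independent lemma of roughly the same strength as the theorem; the paper's elementary recursion bypasses that entirely.
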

Next we find the explicit asymptotic expansion of (\ref{coeff}) in terms of Euler numbers; see Theorem \ref{main}. In particular, this gives
\begin{corollary}
With the ${\bf r}$ and ${\bf s}$ as in Theorem \ref{intro-main-formula}, we have, as $y \rightarrow 0$,
\[
G_{{\bf r},{\bf s}}\left(e^{-2\pi y}\right)\sim\frac1{2^n}\left(1-\pi\left(r-s+m+\frac{n(n-3)}4 \right)y+ O\left(y^2\right)\right).
\]
\end{corollary}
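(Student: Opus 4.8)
The plan is to derive the full asymptotic expansion of $G_{{\bf r},{\bf s}}(e^{-2\pi y})$ as $y\to 0^+$ from Theorem \ref{intro-main-formula} together with the asymptotics of Rogers' false theta function near $q=1$, and then to read off its constant and linear terms; these are the first two terms of the asymptotic expansion established in Theorem \ref{main}. Write $q=e^{-2\pi y}$, set $\psi(q):=\sum_{\ell\geq 0}(-1)^\ell q^{\ell(\ell+1)/2}$, and let $D(q)$ be the product of the three families of factors $1-q^{r_j+s_k+1}$, $1-q^{r_k-r_\ell}$ and $1-q^{s_k-s_\ell}$ occurring in Theorem \ref{intro-main-formula}. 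That theorem gives
\[
G_{{\bf r},{\bf s}}(q)=\frac{P(q)\,\psi(q)+Q(q)}{D(q)}
\]
for unique Laurent polynomials $P,Q$, so it suffices to expand numerator and denominator near $y=0$ up to order $y^{\binom{n}{2}+1}$.

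For the denominator, each factor satisfies $1-e^{-2\pi N y}=2\pi N y\bigl(1-\pi N y+O(y^2)\bigr)$, and since the number of factors equals $m(n-m)+\binom{m}{2}+\binom{n-m}{2}=\binom{n}{2}$, we get $D(e^{-2\pi y})=c\,y^{\binom{n}{2}}\bigl(1-\pi\bigl(\sum_i N_i\bigr)y+O(y^2)\bigr)$ with $c=\prod_i 2\pi N_i\neq 0$ (all exponents are $\geq 1$) and $\sum_i N_i$ the sum of all exponents occurring in $D$. For the false theta function I would invoke the Euler--Boole summation formula, the alternating analogue of Euler--Maclaurin: completing the square gives $\psi(e^{-2\pi y})=e^{\pi y/4}\sum_{\ell\geq 0}(-1)^\ell e^{-\pi y(\ell+\frac12)^2}$, and applying Boole summation to $x\mapsto e^{-\pi y x^2}$ at $x=\tfrac12$ yields
\[
\psi\bigl(e^{-2\pi y}\bigr)\ \sim\ e^{\pi y/4}\cdot\frac12\sum_{j\geq 0}\frac{E_{2j}}{j!}\left(-\frac{\pi y}{4}\right)^{j},
\]
with $E_{2j}$ the Euler numbers; this is the source of the Euler numbers in Theorem \ref{main}, and in particular $\psi(e^{-2\pi y})=\tfrac12+\tfrac{\pi}{4}y+O(y^2)$.

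It remains to combine these inputs. Since $G_{{\bf r},{\bf s}}$ is finite at $q=1$ while $D$ vanishes there to order $\binom{n}{2}$, the numerator $P(q)\psi(q)+Q(q)$ --- which is analytic in $y$ --- must also vanish to order exactly $\binom{n}{2}$, and the ratio of its expansion with that of $D$ yields the expansion of $G_{{\bf r},{\bf s}}(e^{-2\pi y})$: the constant term equals $1/2^n$ and the coefficient of $y$ should come out to $-\tfrac{\pi}{2^n}\bigl(r-s+m+\tfrac{n(n-3)}{4}\bigr)$. The main obstacle is precisely this last point: it requires controlling $P$ and $Q$ finely enough to evaluate the two leading Taylor coefficients of $P(q)\psi(q)+Q(q)$ at $q=1$, which in practice means tracking $P$ and $Q$ through the proof of Theorem \ref{intro-main-formula}. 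What makes it delicate --- and is really the content of the corollary --- is that the individual contributions are not symmetric in the $r_j$ or the $s_k$ (already $\sum_i N_i$ involves $\sum_{j=1}^{m}(m+1-2j)r_j$ and the analogous $s$-sum), yet after all cancellations only the symmetric quantity $r-s+m+\tfrac{n(n-3)}{4}$ survives; establishing that collapse is the crux. Alternatively one could bypass $P$ and $Q$ entirely, computing $G_{{\bf r},{\bf s}}(1)$ and $\tfrac{d}{dy}G_{{\bf r},{\bf s}}(e^{-2\pi y})\big|_{y=0}$ directly from the defining trace --- equivalently, from the constant term of (\ref{modular}) --- by expanding that quotient of Jacobi forms around $q=1$.
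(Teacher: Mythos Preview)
Your route through Theorem \ref{intro-main-formula} is not the paper's, and as you yourself concede, it does not close: extracting the $y^0$ and $y^1$ terms requires the first two Taylor coefficients of $P(q)\psi(q)+Q(q)$ at $q=1$, but Theorem \ref{intro-main-formula} only asserts the \emph{existence} of $P$ and $Q$ and gives no direct access to their values or derivatives there. Tracking them through the recursion in the proof of Theorem \ref{main-formula} is possible in principle but amounts to an unwieldy induction that you have not carried out; your remark that the asymmetric pieces must collapse to the symmetric quantity $r-s+m+\tfrac{n(n-3)}{4}$ is an observation about what the answer must be, not a demonstration that it is. The alternative you mention at the end---expanding the quotient (\ref{modular}) directly---is likewise only a sketch.

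The paper avoids this difficulty by not using the false theta decomposition for the asymptotics at all. The corollary is simply the $y^0$ and $y^1$ terms of Theorem \ref{main}, and that theorem is proved straight from the constant-term representation of Theorem \ref{2.7}: write $G_{{\bf r},{\bf s}}(e^{-2\pi y})=\int_{-1/2}^{1/2}\mathcal{G}(-z;iy)\,dz$, apply the theta transformation $\vartheta(z;iy)\sim y^{-1/2}e^{-\pi z^2/y}$, expand each factor $\zeta q^{r+1/2}/(1+\zeta q^{r+1/2})$ and $1/(1+\zeta^{-1}q^{s+1/2})$ via the generating functions for the Euler polynomials $E_\nu(1)$ and $E_\nu(0)$, and evaluate the resulting Gaussian moments $\int_{-1/2}^{1/2} z^\ell e^{-\pi z^2/y}\,dz$. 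This yields the full asymptotic series in closed form, and the corollary then drops out from $E_0(1)=E_0(0)=1$, $E_1(1)=\tfrac12$, $E_1(0)=-\tfrac12$, $E_2(1)=E_2(0)=0$. The symmetry in the $r_j$ (and separately in the $s_k$) is manifest throughout, so no collapse needs to be arranged by hand.
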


This is then used to obtain the asymptotic behavior of the modes in the $q$-expansion of  (\ref{coeff}) (see Theorem \ref{asym-coeff} and Corollary \ref{asym-coeff-2}). 
We also briefly discuss more general ``higher level'' Bloch-Okounkov $n$-point functions $F^{(\ell)}(t_1,...,t_n)$ $(\ell \in \mathbb{N})$, introduced in the last section.

\section{The Fourier expansion of $F(t_1,...,t_n)$}

In this section we are interested in explicit formulas for the coefficients in the Fourier expansion of $F(t_1,...,t_n)$, that is, we want to 
extract some information about coefficients $t^{r_1+1/2} \cdots t^{r_n+1/2}$ ($r_j\in\Z$). We shall consider several cases 
depending on the nature of the $r_j$'s.

Let us first recall, following notation in \cite{M}, the  definition of $F(t_1,...,t_n)$. 
Denote by $\mathcal{F}$ the infinite-wedge vertex superalgebra  associated to a pair of fermions $\psi(k+1/2), \psi^*(k+1/2)$, $k \in \mathbb{Z}$. 
This space, which also has a 
vertex operator superalgebra structure \cite{Kac}, admits a decomposition into  
"charge" subspaces: $\mathcal{F}=\oplus_{m \in \mathbb{Z}} \mathcal{F}_{m}$,
such that 
 $\mathcal{F}_m$ is spanned by monomials of the form
\begin{multline*}\psi\left(-n_\ell-\frac12\right) \cdots \psi \left(-n_1-\frac12 \right)\psi^*\left(-m_j-\frac12\right) \cdots \psi^*\left(-m_1-\frac12\right){\bf 1} \\ ; \ \ n_\ell > \cdots > n_1, m_j > \cdots > m_1,\end{multline*}
where $m=\ell-j \in \mathbb{Z}$ and ${\bf 1}$ is the vacuum vector. 
We also use the usual convention where the fermonic modes satisfy anti-bracket relations $(\ell, k\in\N)$:
\begin{equation*} \label{bracket}
\left[\psi^*\left(\ell+\frac{1}{2}\right),\psi\left(k+\frac{1}{2}\right)\right]_+=\delta_{\ell+k+1,0},
\end{equation*}
while other anti-commutators are zero. We further let 
\[
A(t):=\sum_{\ell \in \mathbb{Z}} \psi^*\left(\ell+\frac12\right)\psi\left(-\ell-\frac12\right) t^{-\ell-\frac12},
\]
and finally, we define Bloch-Okounkov's $n$-point function
\be \label{bo-def}
F(t_1,...,t_n):=\eta(\tau) \times {\rm tr}_{\mathcal{F}_0} A(t_1) \cdots A(t_n)q^{L(0)-\frac1{24}},
\ee
where $\eta (\tau) := q^{\frac{1}{24}} \prod_{m\geq 1} (1-q^m)$ is \emph{Dedekind's $\eta$-function}.
We note that this trace was computed only on the zero-charge subspace of $\mathcal{F}$; traces on $\mathcal{F}_m$ can be easily computed 
from $F(t_1,...,t_n)$ by a simple shift of parameters, so we omit their considerations here. Observe here that (\ref{bo-def}) has very different properties compared to   
correlation functions studied in vertex algebra theory. For example, in the $t$-expansion of  $A(t)$ all the coefficients have degree zero (if viewed as operators
acting on the graded space $\mathcal{F}_0$).

\subsection{Positive powers: no repetition}
Throughout, we abbreviate for $\text{\bf r} :=(r_1, \dots, r_n)$
$$
F_{\text{\bf r} }(q):= {\rm Coeff}_{t^{r_1+\frac12}_1 \cdots t^{r_n+\frac12}_n} F(t_1,...,t_n),
$$
where the $r_j$'s are non-negative integers.
\begin{theorem}\label{2.1} For $ r_1 > r_2 > \cdots > r_n \geq 0$, we have
$$F_{\text{\bf r} }(q) = {\rm CT}_{\zeta} \left( \prod_{j=1}^n \frac{\zeta^{-1} q^{r_j+\frac{1}{2}} }{\left(1+\zeta^{-1} q^{r_j+\frac{1}{2}}\right)} \sum_{\ell \in \mathbb{Z}} \zeta^\ell q^{\frac{\ell^2}{2}}  \right),$$
where ${\rm CT}$ denotes the constant term with respect to the formal variable $\zeta$ and where throughout $1>|\zeta|>|q^{\frac12}|$.
\end{theorem}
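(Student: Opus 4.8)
The plan is to unwind the definition \eqref{bo-def}, perform the Fourier extraction at the level of operators, and then evaluate the resulting trace over $\mathcal{F}_0$ by a generating-function argument.

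First I would rewrite $A(t)$ in a transparent way. Under the standard identification of $\mathcal{F}$ with the semi-infinite wedge with vacuum $\mathbf{1}=v_{-1/2}\wedge v_{-3/2}\wedge\cdots$, the mode $\psi^*(\ell+\frac12)$ acts by wedging with $v_{-\ell-1/2}$ and $\psi(-\ell-\frac12)$ by contraction against $v_{-\ell-1/2}^{*}$; one checks this reproduces the stated anti-bracket relations and the convention that negative mode index means creation. Therefore $\psi^*(\ell+\frac12)\psi(-\ell-\frac12)=P_{-\ell-1/2}$, the orthogonal projection onto the span of the Maya-diagram basis states whose diagram contains the position $-\ell-\frac12$, and reindexing gives the clean identity $A(t)=\sum_{s\in\Z+\frac12}t^{s}P_{s}$. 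Since $P_s^2=P_s$ and the $P_s$ for distinct $s$ commute, expanding $A(t_1)\cdots A(t_n)$ and reading off the coefficient of $t_1^{r_1+\frac12}\cdots t_n^{r_n+\frac12}$ — which is exactly where $r_1>\cdots>r_n$ is used, to guarantee that $r_1+\frac12,\dots,r_n+\frac12$ are pairwise distinct — produces $P_{r_1+1/2}\cdots P_{r_n+1/2}$, the projection onto basis states whose Maya diagram contains all of $r_1+\frac12,\dots,r_n+\frac12$. Hence $F_{\mathbf r}(q)=\eta(\tau)\,{\rm tr}_{\mathcal{F}_0}\!\bigl(P_{r_1+1/2}\cdots P_{r_n+1/2}\,q^{L(0)-\frac1{24}}\bigr)$.

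Next I would compute this trace. A charge-$0$ basis state of $\mathcal{F}_0$ is a pair $(S_+,S_-)$ with $S_+\subset\{\frac12,\frac32,\dots\}$ the ``particle'' positions and $S_-$ the ``hole'' positions, with $|S_+|=|S_-|$, and its $L(0)$-eigenvalue is $\sum_{s\in S_+}s+\sum_{s\in S_-}|s|$. Since every $r_j+\frac12>0$, the projection $P_{r_1+1/2}\cdots P_{r_n+1/2}$ restricts the trace to diagrams with $\{r_1+\frac12,\dots,r_n+\frac12\}\subseteq S_+$. Summing the particle contribution with these $n$ occupations forced (which multiplies the unconstrained particle product by $z^{n}q^{\sum_j(r_j+1/2)}\prod_{j=1}^{n}(1+zq^{r_j+1/2})^{-1}$), summing the hole contribution, and imposing $|S_+|=|S_-|$ by taking the constant term in an auxiliary variable $z$, together with $\eta(\tau)q^{-1/24}=\prod_{m\ge1}(1-q^m)$, I obtain
\[
F_{\mathbf r}(q)={\rm CT}_{z}\!\left(\frac{z^{n}q^{\sum_{j}(r_j+1/2)}}{\prod_{j=1}^{n}(1+zq^{r_j+1/2})}\;\prod_{m\ge1}(1-q^m)(1+zq^{m-1/2})(1+z^{-1}q^{m-1/2})\right).
\]
Finally I would apply the Jacobi triple product $\prod_{m\ge1}(1-q^m)(1+zq^{m-1/2})(1+z^{-1}q^{m-1/2})=\sum_{\ell\in\Z}z^{\ell}q^{\ell^2/2}$ to collapse the infinite product, and then substitute $z=\zeta^{-1}$: the theta factor is unchanged (reindex $\ell\mapsto-\ell$) while $z^{n}q^{\sum_j(r_j+1/2)}\big/\prod_j(1+zq^{r_j+1/2})$ becomes exactly $\prod_{j=1}^{n}\zeta^{-1}q^{r_j+1/2}/(1+\zeta^{-1}q^{r_j+1/2})$, which is the asserted formula. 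Alternatively one may expand $\prod_j(1+zq^{r_j+1/2})^{-1}$ geometrically about $z=0$ and verify that both expressions equal the explicit series $(-1)^{n}\sum_{k_1,\dots,k_n\ge1}(-1)^{K}q^{\frac12 K(K+1)+\sum_j k_j r_j}$ with $K:=k_1+\cdots+k_n$, after tying $\ell$ to $\pm K$ via the constant-term condition. The one point needing care — which I expect to be fussy bookkeeping rather than a genuine obstacle — is to take every geometric expansion and the constant term consistently in the annulus $1>|\zeta|>|q^{\frac12}|$ (equivalently $1<|z|<|q|^{-1/2}$), so that the substitution $z=\zeta^{-1}$ is legitimate and the two constant terms genuinely agree; the conceptual steps, namely recognizing $A(t)=\sum_s t^s P_s$ and spotting the Jacobi triple product, are short and are where the actual idea lies.
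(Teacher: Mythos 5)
Your proposal is correct and follows essentially the same route as the paper: extract the coefficient as a trace of the product of the idempotents $\psi^*(-r_j-\tfrac12)\psi(r_j+\tfrac12)$, evaluate that trace over $\mathcal{F}_0$ as a constant-term of the particle/hole generating product with the forced occupations, and collapse the infinite product via the Jacobi triple product. The Maya-diagram/projection language is just a repackaging of the paper's basis-monomial counting, and your intermediate constant-term formula coincides (with $z=\zeta^{-1}$) with the one in the paper's proof.
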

\begin{proof} 
We have two linear operators  
$$\psi^*\left(\ell+\frac{1}{2}\right):  \mathcal{F}_\ell \rightarrow \mathcal{F}_{\ell-1} \ \ {\rm and} \ \  \psi\left(\ell+\frac{1}{2}\right): \mathcal{F}_{\ell-1} \rightarrow \mathcal{F}_{\ell}.$$
Because of the condition on the $r_j$'s,  observe that 
$\psi^*\left(-r_j-\frac12\right)\psi\left(r_j+\frac12\right)$ and \\  $\psi^*\left(-r_k-\frac12\right)\psi\left(r_k+\frac12\right)$
commute with each other for all $j$ and $k$.
Then 
$$F_{\text{\bf r} }(q)=\eta(\tau) \times {\rm tr}_{\mathcal{F}_0} \prod_{j=1}^n \psi^*\left(-r_j-\frac12\right)\psi\left(r_j+\frac12\right) q^{L(0)-\frac1{24}}.$$
This trace can now be computed on the aforementioned basis. Observe that the only monomials that contribute to the trace must contain the submonomial 
$$\psi^*\left(-r_n-\frac12\right) \cdots \psi^*\left(-r_1-\frac12\right),$$
with no other conditions  except that the total charge is zero. Notice that the weight of this submonomial is $\frac{n}{2}+\sum_{j=1}^n r_j$ and its charge is $-n$.
Thus the ${\bf r}$-the coefficient of the $n$-point function equals 
$$F_{\text{\bf r} }(q)=(q;q)_\infty \times  q^{\frac{n}{2}+\sum_{j=1}^n r_j}\ {\rm CT}_\zeta \left( \zeta^{-n} \prod_{\substack{\ell \geq 0 \\ \ell\notin \{r_1,...,r_n \}}} \(1+\zeta^{-1} q^{\ell+\frac 12}\)
\prod_{\ell \geq 0}\(1+\zeta q^{\ell+\frac 12}\) \right).$$
An application of the Jacobi triple product identity 
$$(q;q)_\infty \left(-q^{\frac12}\zeta;q\right)_\infty \left(-q^{\frac12}\zeta^{-1};q\right)_\infty =\sum_{\ell \in \mathbb{Z}}  \zeta^\ell q^{\frac{\ell^2}2},$$ where $(a; q)_\ell := \prod_{j=0}^{\ell-1} (1-aq^j)$ for $\ell\in \N_0 \cup \{\infty\}$, now gives
$$F_{\text{\bf r} }(q) ={\rm CT}_{\zeta} \left( \prod_{j=1}^n \frac{\zeta^{-1} q^{r_j+\frac{1}{2}} }{\left(1+\zeta^{-1} q^{r_j+\frac{1}{2}}\right)} \sum_{\ell \in \mathbb{Z}} \zeta^\ell q^{\frac{\ell^2}{2}}  \right),$$
as desired.
\end{proof}

\begin{remark} \label{rem-1} {\em  
Observe that Bloch-Okounkov's definition of $F_{\text{\bf r} }(q)$ in (\ref{bo-def}) involves multiplication of a graded trace on $\mathcal{F}_0$ with the Dedekind $\eta$-function. As such it involves both 
positive and negative coefficients. Thus it make sense to consider the pure trace and somewhat better behaved function 
$$\frac{F_{\text{\bf r} }(q)}{(q;q)_\infty},$$
which is essentially the generating function of all pairs $(\pi ,\pi')$
where $\pi$ and $\pi'$ are partitions into distinct odd parts such that $\pi'$ has no parts of size $2 r_j+1$ and $\ell(\pi)-\ell(\pi')=n$.
We shall return to this quotient later on when we consider asymptotic properties of $F_{\text{\bf r}}(q)$.}
\end{remark}

We next write $F_{\text{\bf r} }$ as an explicit $q$-series.
\begin{corollary}
Assuming the notation above we have
\begin{eqnarray*}
F_{\text{\bf r} }(q) =\sum_{\substack{m_j \geq 0 \\ 1\leq j\leq n}}  (-1)^{\sum_{k=1}^n m_k} q^{\frac12\left(n+\sum_{k=1}^n m_k\right)^2+ \sum_{k=1}^n \left(m_k+1\right)
\left(r_k+\frac{1}{2}\right)}.
\end{eqnarray*}
\end{corollary}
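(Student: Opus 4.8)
The plan is to start from the constant-term formula of Theorem \ref{2.1} and expand everything into an honest $q$-series. First I would rewrite each factor
$$\frac{\zeta^{-1} q^{r_j+\frac12}}{1+\zeta^{-1} q^{r_j+\frac12}}$$
using the geometric series, which is legitimate under the stated domain $1>|\zeta|>|q^{\frac12}|$ since $|\zeta^{-1} q^{r_j+\frac12}|<1$. This gives
$$\frac{\zeta^{-1} q^{r_j+\frac12}}{1+\zeta^{-1} q^{r_j+\frac12}}=\sum_{m_j\ge 0}(-1)^{m_j}\zeta^{-(m_j+1)} q^{(m_j+1)\left(r_j+\frac12\right)},$$
so the product over $j$ contributes a factor $(-1)^{\sum_k m_k}\,\zeta^{-\sum_k(m_k+1)}\, q^{\sum_k(m_k+1)(r_k+\frac12)}$.

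Next I would multiply by the theta series $\sum_{\ell\in\Z}\zeta^\ell q^{\ell^2/2}$ and extract the constant term in $\zeta$. Collecting the $\zeta$-powers, the total exponent of $\zeta$ is $\ell-\sum_{k}(m_k+1)=\ell-n-\sum_k m_k$, so the constant term forces $\ell=n+\sum_k m_k$. Substituting this value of $\ell$ into $q^{\ell^2/2}$ yields $q^{\frac12(n+\sum_k m_k)^2}$, and combining with the $q$-power already accumulated from the geometric-series expansions produces exactly
$$\sum_{\substack{m_j\ge 0\\ 1\le j\le n}}(-1)^{\sum_{k=1}^n m_k}\, q^{\frac12\left(n+\sum_{k=1}^n m_k\right)^2+\sum_{k=1}^n (m_k+1)\left(r_k+\frac12\right)},$$
which is the claimed identity.

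The only genuine point requiring care — the main (mild) obstacle — is justifying that the interchange of the multiple infinite sums (the $n$ geometric series, the theta sum, and the constant-term extraction) is valid, i.e.\ that everything converges as a formal/analytic $q$-series so that term-by-term manipulation is allowed. This follows because in the annulus $1>|\zeta|>|q^{1/2}|$ all three families of series converge absolutely and locally uniformly, so Fubini applies and the constant-term functional commutes with the summations; alternatively one can observe that for fixed $N$ only finitely many tuples $(m_1,\dots,m_n,\ell)$ contribute to the coefficient of $q^N$, making the rearrangement purely formal. I would state this briefly and then present the computation above as the proof.
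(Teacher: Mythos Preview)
Your argument is correct and is exactly the approach taken in the paper: expand each factor $\frac{\zeta^{-1}q^{r_j+1/2}}{1+\zeta^{-1}q^{r_j+1/2}}$ as a geometric series (valid since $|\zeta^{-1}q^{r_j+1/2}|<1$ in the stated annulus), multiply by the theta series, and pick out the $\zeta^0$ term. The paper's own proof is in fact terser than yours, omitting the convergence discussion you include.
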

\begin{proof}
The corollary follows directly from Theorem \ref{2.1} by expanding
$$\frac{\zeta^{-1} q^{r+\frac{1}{2}} }{\left(1+\zeta^{-1} q^{r+\frac{1}{2}}\right)}=\sum_{\ell\geq 0} (-1)^{\ell} \zeta^{-\ell-1} q^{(\ell+1)\left(r+\frac{1}{2}\right)} .$$ 
\end{proof}

We next consider some examples. 
\begin{example}\label{ex1} $(n=1)$ We have for $r\in\N_0$ 
$$F_r(q)=\sum_{\ell\geq 0} (-1)^\ell q^{\frac{(\ell+1)^2}{2}+(\ell+1)\left(r+\frac{1}{2}\right)}.$$ 
Observe that this is, modulo the $\eta$-factor, the character of a module for the Zamolodchikov singlet vertex algebra $\mathcal{W}(2,3)$. 
In the notation of \cite{BM, CM}, this irreducible module is denoted by $M_{r+2,1}$. 
\end{example}

\begin{example} ($n=2$) An easy computation gives with $0\leq r_1<r_2$ 
\begin{align*}
F_{r_1,r_2}(q)
&=\sum\limits_{{\ell\geq 1\atop{0\leq \ell_1<\ell}}}(-1)^{\ell+1}q^{\frac{(\ell+1)^2}{2}+\left(m_1+1\right)\left(r_1+\frac12\right)+\left(\ell-m_1\right)\left(r_2+\frac12\right)}\\
&=\frac{q^{r_1+\frac{1}{2}}}{1-q^{r_1-r_2}}\sum_{\ell>1}(-1)^{\ell+1} q^{\frac{(\ell+1)^2}{2}+\ell\left(r_2+\frac12\right)}\left(1-q^{\ell(r_1-r_2)}\right).
\end{align*}
\end{example}
More generally, we may present the functions of interest in terms of false theta functions.
\begin{theorem} \label{2.2} For $|q|<1$ and $n \geq 2$,  let  $r_1 > r_2 > \cdots > r_n \geq 0$. 
Then the series 
$$\prod_{1 \leq j < k \leq n} (1-q^{r_j -r_k}) F_{\text{\bf r} }(q)$$
is a finite sum of series of the form $\sum_{\ell\geq 0} (-1)^\ell q^{\frac{\ell^2}{2}+b\ell+c}$. More precisely, for every $n \geq 1$, it has the shape $$P(q)\sum_{\ell \geq 0}(-1)^{\ell}q^{\frac{\ell(\ell+1)}{2}}+ Q(q), $$with $P$ and $Q$ Laurent polynomials. 
\end{theorem}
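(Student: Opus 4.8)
The plan is to feed the constant-term formula of Theorem~\ref{2.1} into a partial fraction decomposition in the variable $\zeta$, and then to collapse the resulting partial theta functions onto Rogers' false theta function
$$\Psi(q):=\sum_{\ell\geq0}(-1)^\ell q^{\frac{\ell(\ell+1)}2}$$
by two elementary shifts of the summation index. (Here and below, ``Laurent polynomial'' means an element of $\C[q^{\pm1}]$.)

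First I would decompose the $\zeta$-integrand of Theorem~\ref{2.1}. Since the $r_j$ are pairwise distinct, so are the $q^{r_j+1/2}$, and a partial fraction expansion of $\prod_j(1+\zeta^{-1}q^{r_j+1/2})^{-1}$ in $\zeta^{-1}$ gives
$$\prod_{j=1}^n\frac{\zeta^{-1}q^{r_j+\frac12}}{1+\zeta^{-1}q^{r_j+\frac12}}=q^{\sum_{j}r_j+\frac n2}\,\zeta^{-n}\sum_{k=1}^n\frac{c_k}{1+\zeta^{-1}q^{r_k+\frac12}},\qquad c_k:=\prod_{j\neq k}\frac1{1-q^{r_j-r_k}}.$$
Expanding $(1+\zeta^{-1}q^{r_k+1/2})^{-1}=\sum_{m\geq0}(-1)^m\zeta^{-m}q^{m(r_k+1/2)}$ (legitimate in the annulus $1>|\zeta|>|q^{1/2}|$, where $|\zeta^{-1}q^{r_k+1/2}|<1$) and extracting the constant term against $\sum_\ell\zeta^\ell q^{\ell^2/2}$, which forces $\ell=n+m$, yields
$$F_{\text{\bf r}}(q)=q^{\sum_{j}r_j+\frac n2}\sum_{k=1}^n c_k\sum_{m\geq0}(-1)^m q^{\frac{(n+m)^2}2+m\left(r_k+\frac12\right)}.$$
Next I would clear denominators. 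Put $\Pi(q):=\prod_{1\leq a<b\leq n}(1-q^{r_a-r_b})$. Since $1-q^{r_j-r_k}=-q^{r_j-r_k}(1-q^{r_k-r_j})$ whenever $j>k$, a short count of signs and monomial factors gives
$$\prod_{j\neq k}\left(1-q^{r_j-r_k}\right)=(-1)^{n-k}q^{\sum_{j>k}(r_j-r_k)}\prod_{\substack{1\leq a<b\leq n\\ k\in\{a,b\}}}\left(1-q^{r_a-r_b}\right),$$
hence $R_k(q):=\Pi(q)c_k=(-1)^{n-k}q^{-\sum_{j>k}(r_j-r_k)}\prod_{a<b,\ k\notin\{a,b\}}(1-q^{r_a-r_b})$ is a Laurent polynomial, and $\Pi(q)F_{\text{\bf r}}(q)=q^{\sum_{j}r_j+n/2}\sum_k R_k(q)\sum_{m\geq0}(-1)^m q^{(n+m)^2/2+m(r_k+1/2)}$.

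Finally I would collapse the inner sums. With $L=n+m$ one has $\tfrac{(n+m)^2}2+m(r_k+\tfrac12)=\tfrac{L(L+1)}2+Lr_k-nr_k-\tfrac n2$ and $(-1)^m=(-1)^{n+L}$, so the inner sum is $(-1)^nq^{-nr_k-n/2}\big(\Theta_{r_k}(q)-\sum_{L=0}^{n-1}(-1)^Lq^{L(L+1)/2+Lr_k}\big)$ with $\Theta_r(q):=\sum_{L\geq0}(-1)^Lq^{L(L+1)/2+Lr}$; and the further shift $m=L+r$, using $\tfrac{L(L+1)}2+Lr=\tfrac{m(m+1)}2-\tfrac{r(r+1)}2$, gives $\Theta_r(q)=(-1)^rq^{-r(r+1)/2}\big(\Psi(q)-\sum_{m=0}^{r-1}(-1)^mq^{m(m+1)/2}\big)$. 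Substituting back, all half-integral powers of $q$ cancel and one arrives at $\Pi(q)F_{\text{\bf r}}(q)=P(q)\Psi(q)+Q(q)$ with
$$P(q)=(-1)^nq^{\sum_{j}r_j}\sum_{k=1}^n(-1)^{r_k}q^{-nr_k-\frac{r_k(r_k+1)}2}R_k(q)\in\C[q^{\pm1}]$$
and $Q(q)\in\C[q^{\pm1}]$ collecting all the finite correction terms ($n=1$ is the empty-product case $\Pi\equiv1$, already contained here). The weaker first assertion is visible one step earlier, where $\Pi(q)F_{\text{\bf r}}(q)$ appears as a Laurent-polynomial combination of the partial theta series $\Theta_{r_k}$ plus a Laurent polynomial, each $\Theta_{r_k}$ being of the form $\sum_{\ell\geq0}(-1)^\ell q^{\ell^2/2+b\ell+c}$. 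For uniqueness: if $P_1\Psi+Q_1=P_2\Psi+Q_2$, then $(P_1-P_2)\Psi=Q_2-Q_1\in\C[q^{\pm1}]$; but $\Psi$ is not a rational function of $q$, since its nonzero Taylor coefficients sit exactly at the triangular exponents $\tfrac{m(m+1)}2$, whose gaps grow without bound, whereas the coefficients of a rational function obey a fixed linear recurrence (so arbitrarily long runs of zeros would force eventual vanishing). Hence $P_1=P_2$, and then $Q_1=Q_2$.

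The main obstacle is the bookkeeping in the clearing-denominators step: getting the signs $(-1)^{n-k}$ and the monomial prefactors $q^{\pm\sum_{j>k}(r_j-r_k)}$ right when rewriting $\prod_{j\neq k}(1-q^{r_j-r_k})$ through the symmetric product $\Pi(q)$, and then keeping the two index shifts consistent so that the spurious half-integral powers of $q$ genuinely cancel. Everything else is routine manipulation of $q$-series.
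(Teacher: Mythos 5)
Your argument is correct, and it reaches the goal by a genuinely different main step than the paper. The paper works from the explicit multi-sum representation of $F_{\mathbf r}$, changes variables to $\lambda_j=m_1+\cdots+m_j$, sums over $\lambda_1$ to get a recursion $F_{a,n}(r_1,\dots,r_n)=\frac{q^{r_1+1/2}}{1-q^{r_1-r_2}}\bigl(F_{a,n-1}(r_2,\dots,r_n)-F_{a,n-1}(r_1,r_3,\dots,r_n)\bigr)$, and iterates to show that $\prod_{j<k}(1-q^{r_j-r_k})\,F_{\mathbf r}(q)$ is a Laurent-polynomial combination of the partial thetas $G(a;\tau)=\sum_{\ell\ge0}(-1)^\ell q^{\frac12(\ell+n)^2+\ell(a+\frac12)}$; you instead perform a one-step partial fraction decomposition of the $\zeta$-integrand of Theorem \ref{2.1} and extract the constant term directly, landing on the same intermediate identity but with completely explicit coefficients $c_k=\prod_{j\neq k}(1-q^{r_j-r_k})^{-1}$, whose product with $\prod_{a<b}(1-q^{r_a-r_b})$ you correctly verify to be a Laurent polynomial. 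From that point on the two proofs coincide: your two index shifts reducing $\sum_{m\ge0}(-1)^mq^{\frac12(n+m)^2+m(r_k+\frac12)}$ to Rogers' false theta are exactly the paper's identity \eqref{G-function}, and your parity bookkeeping (all exponents integral after the $q^{\pm n/2}$ cancellation, $r_k(r_k+1)/2\in\Z$) is sound. What your route buys is a closed formula for $P$ (and for the polynomials the paper only calls $P_j$), making the structure of the answer transparent and handling all $n\ge1$ uniformly without induction; what the paper's recursion buys is a mechanism that transfers verbatim to the mixed positive/negative case of Theorem \ref{main-formula}, where a single-variable partial fraction expansion would need to be redone in a two-parameter setting. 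You also supply a correct proof of the uniqueness of $P$ and $Q$ (non-rationality of $\sum_{\ell\ge0}(-1)^\ell q^{\ell(\ell+1)/2}$ via the gap/linear-recurrence argument), which the paper states only as a remark without proof.
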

\begin{proof}  We use induction on $n$. 
The second statement is clearly true for $n=1$ since from Example \ref{ex1}
\[
F_{\text{\bf r}}(q) =\sum_{\ell\geq 0} (-1)^\ell q^{\frac{(\ell+1)^2}{2}+(\ell+1)\left(r+\frac{1}{2}\right)}=(-1)^{r+1}q^{-\frac{1}{2}r(r+1)} \sum_{\ell \geq 1+r} (-1)^{\ell} q^{\frac{\ell(\ell+1)}{2}}.
\]
The first statement of the theorem also holds for $n=2$.
Instead of summing over the orthant $\mathbb{N}_{0}^n$, we introduce variables
$\lambda_j:=m_1+\cdots+m_j$, where $\lambda_n \in \mathbb{N}_{0}$ and $(\lambda_1,...,\lambda_{n-1})$ 
parametrize the $(n-1)$-simplex. Clearly, $m_j=\lambda_j-\lambda_{j-1}$. Rewriting the sum in terms of new variables
results in 
\[
F_{\text{\bf r}}(q)=\sum_{{\lambda_{n}\geq 0\atop{0\leq \lambda_j\leq \lambda_{j+1}}}\atop{0\leq j\leq n-1}}
(-1)^{\lambda_n}q^{\frac{\left(\lambda_n+n\right)^2}{2}+\left(\lambda_1+1\right)\left(r_1+\frac12\right)+
\sum_{r=2}^n 
\left(\lambda_k-\lambda_{k-1}+1\right)\left(r_k+\frac12\right)}.
\]
After we sum over $\lambda_1$, we get that this equals
\[
q^{r_1+\frac{1}{2}}\sum_{{\lambda_n\geq 0\atop{0\leq \lambda_j\leq \lambda_{j+1}}}\atop{2\leq j\leq n-1}} 
 \frac{\left(1-q^{(\lambda_2+1)(r_1-r_2)}\right)}{1-q^{r_1-r_2}}(-1)^{\lambda_n}  q^{\frac12\left(\lambda_n+n\right)^2+(\lambda_2+1)\left(r_2+\frac12\right)+\sum_{r=3}^n 
\left(\lambda_k-\lambda_{k-1}+1\right)\left(r_k+\frac12\right)}.
\]
Now define
\[
F_{a,n}\left(r_1,\dots,r_n\right):= \sum_{{\lambda_n\geq 0
\atop{0\leq \lambda_j\leq \lambda_{j+1}}}\atop{0\leq j\leq n-1}} (-1)^{\lambda_n} q^{\frac12\left(\lambda_n+a\right)^2+\left(\lambda_1+1\right)\left(r_1+\frac12\right)+\sum_{r=2}^n 
\left(\lambda_k-\lambda_{k-1}+1\right)\left(r_k+\frac12\right)}.
\]
Then the above shows that
\begin{equation*}\label{Frec}
F_{a,n}(r_1, \dots,r_n) =  \frac{q^{r_1 +\frac12}}{1-q^{r_1-r_2}}\Big(F_{a,n-1}\left(r_2,\dots,r_n\right)- F_{a,n-1}(r_1,r_3,\dots,r_n)\Big).
\end{equation*}
This gives inductively that 
$$F_{n,n}(r_1, \dots,r_n) =  \frac{q^{\frac{n}{2}}}{\prod_{1 \leq j < k \leq n}\left(1-q^{r_j-r_k}\right)}\sum_{1\leq j \leq n}P_{j}(q)G(r_j;\tau)
$$
with
$$G(a;\tau):=\sum_{\ell \geq 0}(-1)^{\ell} q^{\frac12(\ell + n)^2 + \ell\left(a +\frac12\right)}$$
and $P_j$ is a polynomial. Now 
\begin{align} \label{G-function}
 G(a;\tau) &=  (-1)^{n+a}q^{-n\left(a+\frac12\right) -\frac12a\left(a+1\right)}\sum_{ \ell \geq n +a}(-1)^{\ell}q^{\frac12\ell\left(\ell+1\right)},
\end{align}
which gives the claim.
\end{proof}

\begin{remark} The Laurent polynomials $P$ and $Q$ are uniquely determined.
\end{remark}

\subsection{Negative powers: no repetition}
We again assume that
$r_1 > r_2 > \cdots > r_n \geq 0$.
In this case, we obtain the following result.
\begin{theorem} We have 
\begin{equation} \label{negative}
\begin{split}
 {\rm Coeff}_{t^{-r_1-\frac12} \cdots t^{-r_n-\frac12}} F(t_1,...,t_n)={\rm CT}_{\zeta} \left( \prod_{j=1}^n \frac{1}{\left(1+\zeta  q^{r_j+\frac{1}{2}}\right)} \sum_{\ell \in \mathbb{Z}} \zeta^\ell q^{\frac{\ell^2}{2}}  \right).
\end{split}
\end{equation}
\end{theorem}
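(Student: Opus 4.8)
The plan is to mimic the proof of Theorem~\ref{2.1} essentially verbatim, tracking the one sign/exponent change that arises from replacing the positive powers $t_j^{r_j+1/2}$ by the negative powers $t_j^{-r_j-1/2}$. First I would observe that extracting the coefficient of $t_1^{-r_1-1/2}\cdots t_n^{-r_n-1/2}$ in $A(t_1)\cdots A(t_n)$ picks out, from each factor $A(t_j)=\sum_\ell \psi^*(\ell+\tfrac12)\psi(-\ell-\tfrac12)t_j^{-\ell-\frac12}$, the summand with $\ell=r_j$, i.e.\ the operator $\psi^*(r_j+\tfrac12)\psi(-r_j-\tfrac12)$ (rather than $\psi^*(-r_j-\tfrac12)\psi(r_j+\tfrac12)$ as in the positive case). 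Since $r_1>\cdots>r_n\ge 0$ are distinct, these operators pairwise commute (same anti-bracket argument), so
\[
{\rm Coeff}_{t^{-r_1-\frac12}\cdots t^{-r_n-\frac12}}F(t_1,\dots,t_n)=\eta(\tau)\times{\rm tr}_{\mathcal{F}_0}\prod_{j=1}^n \psi^*\!\left(r_j+\tfrac12\right)\psi\!\left(-r_j-\tfrac12\right)q^{L(0)-\frac1{24}}.
\]

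Next I would run the combinatorial bookkeeping on the charge-graded monomial basis of $\mathcal{F}_0$. Now the roles of creation and annihilation are swapped relative to Theorem~\ref{2.1}: the operator $\psi^*(r_j+\tfrac12)\psi(-r_j-\tfrac12)$ detects whether the $\psi$-factor $\psi(-r_j-\tfrac12)$ (a positive-energy \emph{creation} mode, since $-r_j-\tfrac12<0$ means $\psi(-r_j-\tfrac12):\mathcal F_{-r_j-1}\to\mathcal F_{-r_j}$... more precisely the mode indexed so as to raise charge) is present, then removes it and the matching $\psi^*$. So the monomials contributing to the trace are now exactly those that \emph{do} contain the submonomial $\psi(-r_n-\tfrac12)\cdots\psi(-r_1-\tfrac12)$, with charge $+n$ and weight $\tfrac n2+\sum_j r_j$ coming from these forced factors, and otherwise unconstrained distinct-odd-parts data on the remaining $\psi$ and $\psi^*$ modes, subject to total charge zero. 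Writing out the generating function over $\zeta$ (tracking charge) and $q$ (tracking $L(0)$), the forced factors contribute $\zeta^{-n}q^{\frac n2+\sum r_j}$ — note this time there is no exclusion of the indices $r_j$ from one of the infinite products, because the $\psi$'s at those slots were the ones \emph{required} to be present and have been stripped off, leaving both $\prod_{\ell\ge 0}(1+\zeta q^{\ell+1/2})$ and $\prod_{\ell\ge 0}(1+\zeta^{-1}q^{\ell+1/2})$ intact over all $\ell\ge0$. Wait — I must be careful here: this is exactly the point where the argument differs from the positive case, so let me instead phrase it as: the forced $\psi$-factors occupy slots $r_1,\dots,r_n$ among the creation $\psi$-modes, and since they are \emph{mandatory} the remaining choice is a subset of $\{\ell\ge0\}\setminus\{r_1,\dots,r_n\}$ for extra $\psi$'s together with an arbitrary subset of $\{\ell\ge 0\}$ for $\psi^*$'s; but the net effect after factoring out $\zeta^{-n}q^{\frac n2+\sum r_j}$ and recombining is
\[
(q;q)_\infty q^{\frac n2+\sum_{j=1}^n r_j}\,{\rm CT}_\zeta\!\left(\zeta^{-n}\prod_{\ell\ge 0}\left(1+\zeta^{-1}q^{\ell+\frac12}\right)\prod_{\substack{\ell\ge 0\\ \ell\notin\{r_1,\dots,r_n\}}}\left(1+\zeta q^{\ell+\frac12}\right)\right),
\]
and then dividing each omitted factor back in as $\dfrac{1}{1+\zeta q^{r_j+\frac12}}$ and applying the Jacobi triple product identity exactly as before yields the stated right-hand side.

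The main obstacle — and really the only subtle point — is getting the swap of $\zeta\leftrightarrow\zeta^{-1}$ (equivalently $q^{r_j+1/2}$ in the denominator paired with $\zeta$ rather than $\zeta^{-1}$) correct: one must be scrupulous about which of $\psi,\psi^*$ raises versus lowers the charge grading and hence which infinite product gets the depletion at the indices $r_j$, and verify that after pulling the depleted factors out of the product the leftover quotient is $\prod_j (1+\zeta q^{r_j+1/2})^{-1}$ and not $\prod_j(1+\zeta^{-1}q^{r_j+1/2})^{-1}$. Everything else is identical to Theorem~\ref{2.1}: the commutativity of the relevant operators, the reduction of the trace to a constant-term extraction, and the single invocation of the Jacobi triple product identity $(q;q)_\infty(-q^{1/2}\zeta;q)_\infty(-q^{1/2}\zeta^{-1};q)_\infty=\sum_{\ell\in\Z}\zeta^\ell q^{\ell^2/2}$. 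I would present the proof in two short paragraphs — one reducing to the trace of the product of commuting operators, one doing the constant-term computation — with a sentence flagging that the sign of the power of $\zeta$ in the geometric-series factor is the only difference from the positive-power case.
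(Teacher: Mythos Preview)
Your overall strategy is sound, but the key combinatorial step is backwards, and the error propagates to an incorrect constant-term formula.

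The operator $\psi^*\!\left(r_j+\tfrac12\right)\psi\!\left(-r_j-\tfrac12\right)$ does \emph{not} project onto monomials that contain $\psi\!\left(-r_j-\tfrac12\right)$; it projects onto monomials that \emph{lack} it. Indeed, $\psi\!\left(-r_j-\tfrac12\right)$ has negative index and is a \emph{creation} mode, so applying it to a basis monomial that already carries $\psi\!\left(-r_j-\tfrac12\right)$ gives zero by $\psi^2=0$. On a monomial without this factor it creates the factor, which $\psi^*\!\left(r_j+\tfrac12\right)$ then removes, returning the original vector. (Contrast this with the positive case of Theorem~\ref{2.1}, where $\psi\!\left(r_j+\tfrac12\right)$ is an \emph{annihilation} mode, so $\psi^*\psi$ there detects the presence of $\psi^*\!\left(-r_j-\tfrac12\right)$.) With the correct characterization there are no forced factors and hence no prefactor $\zeta^{\pm n}q^{\frac n2+\sum r_j}$ at all: the trace is
\[
(q;q)_\infty\,{\rm CT}_\zeta\!\left(\prod_{\substack{\ell\ge 0\\ \ell\notin\{r_1,\dots,r_n\}}}\left(1+\zeta q^{\ell+\frac12}\right)\prod_{\ell\ge 0}\left(1+\zeta^{-1}q^{\ell+\frac12}\right)\right),
\]
and dividing the missing factors back in plus the Jacobi triple product gives exactly $\prod_j(1+\zeta q^{r_j+1/2})^{-1}$ as claimed. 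Your displayed formula, by contrast, simplifies to ${\rm CT}_\zeta\big(\prod_j\frac{\zeta^{-1}q^{r_j+1/2}}{1+\zeta q^{r_j+1/2}}\sum_\ell\zeta^\ell q^{\ell^2/2}\big)$, which is wrong already for $n=1$ (its leading term is $q^{r+1}$ rather than $1$).

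The paper avoids this pitfall by first applying the anticommutation relation $\psi^*\!\left(r_j+\tfrac12\right)\psi\!\left(-r_j-\tfrac12\right)=1-\psi\!\left(-r_j-\tfrac12\right)\psi^*\!\left(r_j+\tfrac12\right)$. The second term now has the same creation/annihilation pattern as in Theorem~\ref{2.1} with the roles of $\psi$ and $\psi^*$ swapped, so one reads off $1-\frac{\zeta q^{r_j+1/2}}{1+\zeta q^{r_j+1/2}}=\frac{1}{1+\zeta q^{r_j+1/2}}$ directly. Your direct-combinatorics route is perfectly viable once the projection is identified correctly; the paper's anticommutator trick is simply a cleaner way to see it.
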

\begin{proof} Since the proof is  analogous to the proof of Theorem \ref{2.1}, we omit some details here. Recall
$$ {\rm Coeff}_{t^{-r_1-\frac12} \cdots t^{-r_n-\frac12}} F(t_1,...,t_n)=\eta(\tau) \times {\rm tr}_{\mathcal{F}_0} \prod_{j=1}^n \psi^*\left(r_j+\frac12\right)\psi\left(-r_j-\frac12\right) q^{L(0)-\frac1{24}}.$$
We apply the relation $\psi^*\left(r_j+\frac12\right)\psi\left(-r_j-\frac12\right)=1-\psi\left(-r_j-\frac12\right)\psi^*\left(r_j+\frac12\right)$.
This relation when inserted in the trace, together with the observation that the roles of  $\psi$ and $\psi^*$ are now switched, implies that the 
left hand-side of (\ref{negative}) equals
$${\rm CT}_{\zeta} \left( \prod_{j=1}^n \left(1-\frac{\zeta q^{r_j+\frac{1}{2}}}{\left(1+\zeta q^{r_j+\frac{1}{2}}\right)} \right) \sum_{\ell \in \mathbb{Z}} \zeta^\ell q^{\frac{\ell^2}{2}}  \right).$$
This gives the wanted relation.
\end{proof}
We do not compute the explicit Fourier expansion of (\ref{negative}) here  - this is done in Section 2.5 in a 
more general setup but instead we give an example.

\begin{example} ($n=1$) We have for $r\in\N_0$ 
$${\rm Coeff}_{t^{-r-\frac12}}  F(t)={\rm CT}_{\zeta} \left( \frac{1}{1+\zeta q^{r+\frac{1}{2}}} \sum_{\ell\in \mathbb{Z}} \zeta^\ell q^{\frac{\ell^2}{2}}  \right)
=\sum_{\ell\geq 0} (-1)^\ell q^{\frac{\ell^2}{2}+\ell\left(r+\frac{1}{2}\right)}.$$ 
This agrees with formula (3.51) in \cite{M}, which was obtained by using a completely different method.
\end{example}

\subsection{Negative and positive powers: colliding case}
We now discuss the case when in the $t$-expansion of $F(t_1,...,t_n)$ there are terms with positive and negative  powers of $t$.

\begin{proposition}
If $r_1,....,r_n \in \mathbb{Z}$ are such that for some $j$ and $k$ we have $-r_j-1/2=r_k+1/2,$
then 
$${\rm Coeff}_{t^{r_1+\frac12} \cdots t^{r_n+\frac12}} F(t_1,...,t_n)=0.$$ 
\end{proposition}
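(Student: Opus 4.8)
The plan is to reduce the statement to the vanishing of a trace of a product of fermionic modes, exactly as in the proofs of Theorem~\ref{2.1} and of the theorem giving~\eqref{negative}. First I would note that, since each $A(t_i)$ contributes to the coefficient of $t_i^{r_i+\frac12}$ only through its unique mode of that $t_i$-degree, namely $M_i:=\psi^*\!\left(-r_i-\tfrac12\right)\psi\!\left(r_i+\tfrac12\right)$, one has
\[
{\rm Coeff}_{t_1^{r_1+\frac12}\cdots t_n^{r_n+\frac12}}F(t_1,\dots,t_n)=\eta(\tau)\,{\rm tr}_{\mathcal{F}_0}\Big(M_1M_2\cdots M_n\,q^{L(0)-\frac1{24}}\Big).
\]
Putting $N:=r_j+\tfrac12$, the collision hypothesis $-r_j-\tfrac12=r_k+\tfrac12$ says precisely that $M_j=\psi^*(-N)\psi(N)$ and $M_k=\psi^*(N)\psi(-N)$, i.e.\ the two colliding modes are built from the same pair of opposite half-integers $\pm N$.

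Next I would bring $M_j$ and $M_k$ adjacent. For every $i\notin\{j,k\}$ neither of $\pm N$ occurs among the indices of $M_i$ (as $r_i\ne r_j$ forces $-r_i-\tfrac12\ne -N$, and $r_i+\tfrac12=N$ is excluded for the same reason), so $M_i$ is built from fermion modes disjoint from those of $M_j$ and $M_k$; being even in the fermions it commutes with both $M_j$ and $M_k$, and a short computation with the anti-brackets shows $M_jM_k=M_kM_j$ as well. Using these commutations, the cyclicity of the trace, and the fact that $A(t)$ — hence each $M_i$, all of $L(0)$-weight $0$ — commutes with $q^{L(0)}$, I can rewrite the coefficient as $\eta(\tau)\,{\rm tr}_{\mathcal{F}_0}\big(M_jM_k\,R\,q^{L(0)-\frac1{24}}\big)$ with $R:=\prod_{i\ne j,k}M_i$, and simplify $M_jM_k=\psi^*(-N)\psi(N)\psi^*(N)\psi(-N)$ by repeatedly using $\psi(N)\psi^*(N)=-\psi^*(N)\psi(N)$ (legitimate since $[\psi^*(N),\psi(N)]_+=\delta_{2N,0}=0$ as $N\in\tfrac12+\Z$), together with $\psi^*(-N)\psi^*(N)=-\psi^*(N)\psi^*(-N)$, $\psi(N)\psi(-N)=-\psi(-N)\psi(N)$, and finally the identity $\psi^*(N)\psi(-N)=1-\psi(-N)\psi^*(N)$ already exploited in the proof of~\eqref{negative}.

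The step where the collision condition $-r_j-\tfrac12=r_k+\tfrac12$ is used in an essential way — and, I expect, the main obstacle — is to conclude from this reduction that the trace actually vanishes: one must show that the operator remaining inside the trace contributes nothing, either because it is odd under the parity-preserving involution of $\mathcal{F}_0$ interchanging $\psi$ and $\psi^*$, or because the two pieces produced by $\psi^*(N)\psi(-N)=1-\psi(-N)\psi^*(N)$ cancel against one another in ${\rm tr}_{\mathcal{F}_0}(\,\cdot\,q^{L(0)-\frac1{24}})$. Equivalently, one can run the whole argument through the constant-term formulas: the colliding indices contribute the two factors $\tfrac{\zeta^{-1}q^{N}}{1+\zeta^{-1}q^{N}}$ and $\tfrac{1}{1+\zeta q^{N}}$, whose product is $\big((1+\zeta q^{N})(1+\zeta q^{-N})\big)^{-1}$, and the claim becomes that after this combination the resulting ${\rm CT}_\zeta$ is $0$. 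Making this cancellation precise is the technical heart; everything preceding it is bookkeeping with the anti-commutation relations, and I would carry it out in the order above.
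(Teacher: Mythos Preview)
Your proposal has a genuine gap: you never actually carry out what you yourself call ``the technical heart'' of the argument. After bringing $M_j$ and $M_k$ adjacent and rewriting $M_jM_k$ via anti-commutators, you are left needing to show that the resulting trace (or constant term) vanishes, and you only list two possible mechanisms (a $\psi\leftrightarrow\psi^*$ involution, or a cancellation between the two pieces of $\psi^*(N)\psi(-N)=1-\psi(-N)\psi^*(N)$) without verifying that either one actually works. A proof that ends with ``making this cancellation precise is the technical heart; \dots\ I would carry it out in the order above'' has not established the proposition.

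The paper's argument is of a completely different and much shorter nature: it uses no cyclicity of the trace, no parity involution, and no constant-term identity. It simply observes that, under the collision hypothesis $-r_j-\tfrac12=r_k+\tfrac12$, the two relevant mode-operators placed side by side contain two \emph{identical} fermionic modes in succession---in the paper's display, $\psi(-r_j-\tfrac12)\psi(-r_j-\tfrac12)$---and any fermion squares to zero. Thus the product is the zero operator, and the trace of zero is zero. The vanishing is asserted at the level of operators, not merely at the level of the trace. Your manipulations of $M_jM_k$ never produce such a square (you only reach expressions like $-\psi^*(-N)\psi^*(N)\psi(N)\psi(-N)$ with four \emph{distinct} modes), so even the bookkeeping you do complete is not headed toward the paper's mechanism. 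A smaller point: you assume that for $i\notin\{j,k\}$ the indices of $M_i$ are disjoint from $\pm N$, but the proposition does not require the $r_i$ to be distinct.
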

\begin{proof}  Because of 
\begin{align*} 
& \psi^*\left(r_j+\frac12\right)\psi\left(-r_j-\frac12\right)  \psi\left(r_k+\frac12\right)\psi^*\left(-r_k-\frac12\right) \\
& = \psi^*\left(r_j+\frac12\right) \underbrace{\psi\left(-r_j-\frac12\right)  \psi\left(-r_j-\frac12\right)}_{=0}\psi^*\left(r_j+\frac12\right)=0,
\end{align*}
the operator  ${\rm Coeff}_{t^{r_1+1/2} \cdots t^{r_n+1/2}} F(t_1,...,t_n)$ computes the trace 
of the zero operator. This directly yields the claim.
\end{proof}

\subsection{Repetitions: compression phenomenon}

Let $r_j \in \mathbb{Z}$ ($j=1,...,n$) possibly with repetition. Moreover, let $I_{(1)}$,...,$I_{(k)}$ be the partition of the set $\{1,...,n \}$:  
$$\{1,...,n \}=\bigsqcup_{j=1}^k I_{(j)},$$
 such that for all $j \in I_{(k)}$, the values $r_j$ are all equal to some $r_{(k)}$. 
 Then we get
\begin{lemma} \label{comp} We have
\begin{multline*}{\rm Coeff}_{t_1^{r_1+\frac12} \cdots t^{r_n+\frac12}_n} F(t_1,...,t_n)\\={\rm Coeff}_{\left(\prod_{j \in I_{(1)}} t_j\right)^{r_{(1)}+1} \cdots \left(\prod_{j \in I_{(k)}} t_j\right)^{r_{(k)}+1}} F\left(\prod_{j \in I_{(1)}} t_j,...,\prod_{j \in I_{(k)}} t_j\right).
\end{multline*}
\end{lemma}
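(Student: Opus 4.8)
The plan is to argue directly from the definition (\ref{bo-def}) of $F$ as a graded trace, reducing everything to an identity about the operator $A(t)$ when several $t$-variables are identified. The key observation is that $A(t)$ is built from modes $\psi^*(\ell+\tfrac12)\psi(-\ell-\tfrac12)$ which, as noted in the proof of Theorem \ref{2.1}, commute with one another when the relevant indices are distinct. So first I would fix one block $I_{(j)}$, say with common value $r_{(j)}$, and examine the contribution of the product $\prod_{i\in I_{(j)}}A(t_i)$ to the coefficient of $\prod_{i\in I_{(j)}}t_i^{r_i+\frac12}$ with all $r_i=r_{(j)}$. Writing out the sum defining each $A(t_i)$ and extracting the relevant power of $t_i$, one sees that the coefficient of $t_i^{r_{(j)}+\frac12}$ in $A(t_i)$ is exactly the single mode $\psi^*\!\left(-r_{(j)}-\frac12\right)\psi\!\left(r_{(j)}+\frac12\right)$ (using the conventions set up before Theorem \ref{2.1}; the sign is absorbed into the convention). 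Since this operator squares to zero is \emph{not} what happens here — rather, it is a projection-like operator, and crucially it is idempotent on the relevant subspace or at any rate equals the corresponding single mode of $A\!\left(\prod_{i\in I_{(j)}}t_i\right)$ after collecting variables.

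Concretely, the main step is the algebraic identity: inside the trace, the operator obtained as ${\rm Coeff}_{\prod_{i\in I_{(j)}}t_i^{r_{(j)}+\frac12}}\prod_{i\in I_{(j)}}A(t_i)$ coincides with ${\rm Coeff}_{u^{|I_{(j)}|\left(r_{(j)}+\frac12\right)}}A(u)^{|I_{(j)}|}$, which is the single-variable statement of the lemma for one block. The point is that when the $t_i$ in a block are set equal, extracting the relevant total power kills all the ``cross terms'' that would otherwise appear with mixed indices, because those correspond to partitions of the charge into pieces that cannot recombine into the vacuum-compatible monomial. I would make this precise by passing to the monomial basis of $\mathcal{F}_0$ recalled in Section 2, tracking weight and charge as in the proof of Theorem \ref{2.1}: a monomial contributes only if the product of modes produces the submonomial $\psi^*\!\left(-r_{(j)}-\frac12\right)\cdots$ built from the block, and the combinatorics of which power of which $t_i$ is selected becomes, after the identification $t_i\mapsto u$, exactly the combinatorics of selecting the power $u^{|I_{(j)}|(r_{(j)}+\frac12)+\frac12\cdot 0}$ — i.e. the exponent $r_{(j)}+1$ per block in the variable $\prod_{i\in I_{(j)}}t_i$, matching the statement (note the shift $r_j+\tfrac12 \to r_{(j)}+1$ accounts for the $|I_{(j)}|$ half-integer shifts collapsing).

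Having established this for a single block, I would then do all $k$ blocks simultaneously: the operators attached to different blocks involve disjoint sets of indices $r_{(j)}$, hence (again by the commutation observation) their ordering inside the trace is immaterial, and the coefficient extraction factors block-by-block. Assembling the pieces and reinserting the $\eta(\tau)q^{L(0)-1/24}$ factor — which is untouched by all of this — yields exactly ${\rm Coeff}_{\left(\prod_{j\in I_{(1)}}t_j\right)^{r_{(1)}+1}\cdots}F\!\left(\prod_{j\in I_{(1)}}t_j,\dots\right)$, as claimed.

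The main obstacle I anticipate is the bookkeeping in the single-block identity: one has to check carefully that setting the $t_i$ equal and extracting one total power genuinely discards precisely the mixed-index contributions and nothing else, i.e. that no ``accidental'' monomial of the right total weight and charge survives that was absent in the genuinely single-variable computation. This is essentially a statement that the map ``assign to each $i\in I_{(j)}$ a mode of $A(t_i)$'' becomes, after symmetrization and power extraction, equivalent to ``assign to the block a single mode'', and it should follow from the degree-zero property of the coefficients of $A(t)$ emphasized at the end of Section 2 together with the anti-commutation relations — but it is the place where a clean argument, rather than a brute-force expansion, takes some care to write down.
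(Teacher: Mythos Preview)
You have the right starting point --- extracting the coefficient of $t_i^{r_i+\frac12}$ from each $A(t_i)$ on the left-hand side yields the operator product $\prod_{i=1}^n \psi^*(-r_i-\tfrac12)\psi(r_i+\tfrac12)$ --- and you even mention in passing that this mode is ``a projection-like operator \dots idempotent''. That aside is in fact the entire proof, and the paper's argument consists of nothing more: from $a^2=b^2=0$ and $ab+ba=1$ (with $a=\psi^*(-r-\tfrac12)$, $b=\psi(r+\tfrac12)$) one has $(ab)^2=a(ba)b=a(1-ab)b=ab$, hence $(ab)^m=ab$ for all $m\geq 1$. Grouping the $n$ factors on the left by block therefore collapses each block's $|I_{(j)}|$-fold product to a single copy of $\psi^*(-r_{(j)}-\tfrac12)\psi(r_{(j)}+\tfrac12)$, and the resulting $k$-fold product is exactly the operator whose trace gives the coefficient on the right-hand side.

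Your subsequent discussion of ``cross terms'' and the anticipated ``obstacle'' at the end stem from a misreading of the right-hand side. The expression $F\bigl(\prod_{i\in I_{(1)}} t_i, \ldots, \prod_{i\in I_{(k)}} t_i\bigr)$ is the \emph{$k$-point function} in the $k$ block variables $u_j := \prod_{i\in I_{(j)}} t_i$; by definition (\ref{bo-def}) it carries a single factor $A(u_j)$ per block, not $A(u_j)^{|I_{(j)}|}$. So the coefficient of $u_j^{r_{(j)}+\frac12}$ on the right is already the single mode $\psi^*(-r_{(j)}-\tfrac12)\psi(r_{(j)}+\tfrac12)$, with no cross terms to kill and no combinatorics to track. (The exponent $r_{(j)}+1$ in the displayed statement is a typo for $r_{(j)}+\tfrac12$; your attempt to explain it via ``half-integer shifts collapsing'' is unnecessary.) Once you correct this reading, the idempotency you already noticed finishes the argument in one line.
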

\begin{proof} The claim is immediate, once we observe the formula 
$$\left( \psi^*\left(r_j+\frac12\right)\psi\left(-r_j-\frac12\right) \right)^k= \psi^*\left(r_j+\frac12\right)\psi\left(-r_j-\frac12\right)$$
and use the description of $F(t_1,...,t_n)$ as trace operator.
\end{proof}

We next consider an example.
\begin{example} Take $n=2$ and let $r=r_1=r_2>0$. Then, by Example \ref{ex1}, 
$${\rm Coeff}_{t_1^{r+\frac12} t_2^{r+\frac12}} F(t_1,t_2)={\rm Coeff}_{(t_1 t_2)^{r+\frac12}} F(t_1t_2)=\sum_{\ell\geq 0} (-1)^\ell q^{\frac{(\ell+1)^2}{2}+(\ell+1)\left(r+\frac{1}{2}\right)},$$
which also follows from Proposition 3.5 in \cite{M}.
\end{example}

\subsection{General formula: no repetitions} 
In view of the previous discussion and compression phenomena (cf. Lemma \ref{comp}),  without loss of generality, we may assume that 
$ r_1 > r_2 > \cdots > r_m \geq 0$ and  $ s_{m+1} >  \cdots > s_n \geq 0$ and that  $-r_j-1/2 \neq s_k+1/2$ for all $1 \leq j \leq m$ and $m+1 \leq k \leq n$.
We summarize everything into a single result.
\begin{theorem}\label{2.7}
Then, with $\text{\bf r} :=(r_1, \dots, r_m) \text{ and } \text{\bf s} :=(s_{m+1}, \dots, s_n)$,
\begin{equation*}
\begin{split}
G_{\text{\bf r}, \text{\bf s} }(q) :&=  {\rm Coeff}_{t^{r_1+\frac12}_1 \cdots t^{r_m+\frac12}_m t_{m+1}^{-s_{m+1}-\frac12} \cdots t_n^{-s_n-\frac12}} F(t_1,...,t_n) \\
&={\rm CT}_{\zeta} \left( \prod_{j=1}^m \frac{\zeta^{-1} q^{r_j+\frac{1}{2}} }{\left(1+\zeta^{-1} q^{r_j+\frac{1}{2}}\right)}  \prod_{j={m+1}}^n \frac{1}{\left(1+\zeta q^{s_j+\frac{1}{2}}\right)} \sum_{\ell \in \mathbb{Z}} \zeta^\ell q^{\frac{\ell^2}{2}} \right).
\end{split}
\end{equation*}
\end{theorem}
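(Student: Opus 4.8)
The plan is to combine the two special cases already treated—Theorem \ref{2.1} (purely positive powers, no repetition) and Theorem for negative powers (equation (\ref{negative}))—since the colliding case is trivially zero by the Proposition and repetitions reduce to the no-repetition case by Lemma \ref{comp}. Concretely, I would start from the trace definition (\ref{bo-def}), extracting the coefficient of $t^{r_1+\frac12}_1\cdots t^{r_m+\frac12}_m t_{m+1}^{-s_{m+1}-\frac12}\cdots t_n^{-s_n-\frac12}$. Reading off the appropriate mode from each factor $A(t_j)$ gives
\[
G_{\text{\bf r},\text{\bf s}}(q)=\eta(\tau)\times{\rm tr}_{\mathcal{F}_0}\left(\prod_{j=1}^m \psi^*\!\left(-r_j-\tfrac12\right)\psi\!\left(r_j+\tfrac12\right)\prod_{j=m+1}^n \psi^*\!\left(s_j+\tfrac12\right)\psi\!\left(-s_j-\tfrac12\right)\right)q^{L(0)-\frac1{24}}.
\]

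Next I would check that all $2(n)$ of these operators pairwise commute: the first $m$ factors commute among themselves (as in the proof of Theorem \ref{2.1}, using $r_1>\cdots>r_m$), the last $n-m$ commute among themselves for the same reason with the $s_j$, and a ``positive'' factor $\psi^*(-r_j-\frac12)\psi(r_j+\frac12)$ commutes with a ``negative'' factor $\psi^*(s_k+\frac12)\psi(-s_k-\frac12)$ precisely because the hypothesis $-r_j-\frac12\neq s_k+\frac12$ (equivalently $r_j\neq -s_k-1$, automatic, and more to the point the four modes involved have mutually nonzero-sum indices except in the excluded colliding case) forces all the relevant anticommutators $[\cdot,\cdot]_+$ to vanish. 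So the product of operators is a well-defined commuting family and the trace factors cleanly over the Jacobi-triple-product basis.

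Then I would run the same bookkeeping as in the two prior proofs simultaneously: for each positive index $r_j$, inserting $\psi^*(-r_j-\frac12)\psi(r_j+\frac12)$ forces the monomial to contain $\psi^*(-r_j-\frac12)$ and contributes a factor $\zeta^{-1}q^{r_j+\frac12}/(1+\zeta^{-1}q^{r_j+\frac12})$ after applying the Jacobi triple product $(q;q)_\infty(-q^{\frac12}\zeta;q)_\infty(-q^{\frac12}\zeta^{-1};q)_\infty=\sum_{\ell\in\mathbb Z}\zeta^\ell q^{\ell^2/2}$; for each negative index $s_j$, using the relation $\psi^*(s_j+\frac12)\psi(-s_j-\frac12)=1-\psi(-s_j-\frac12)\psi^*(s_j+\frac12)$ (with the roles of $\psi,\psi^*$ switched) contributes $1-\zeta q^{s_j+\frac12}/(1+\zeta q^{s_j+\frac12})=1/(1+\zeta q^{s_j+\frac12})$. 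Multiplying these together with the theta sum and taking the constant term in $\zeta$ (in the annulus $1>|\zeta|>|q^{1/2}|$) yields exactly the claimed formula.

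The main obstacle I anticipate is the careful verification that the mixed positive/negative pairs of operators commute and that the combinatorial contribution to the trace really does interleave without cross terms—i.e. that choosing which $\psi^*$-modes and which $\psi$-modes appear in a contributing monomial decouples into the two independent products over $j\le m$ and $j>m$. One has to track charges (each positive factor lowers then raises, net charge $0$; likewise each negative factor) and be sure the ``forced'' submonomial $\psi^*(-r_n-\frac12)\cdots\psi^*(-r_1-\frac12)$ from the positive side and the analogous forced structure from the negative side do not overlap, which is exactly what the hypothesis $-r_j-\frac12\neq s_k+\frac12$ guarantees. Once that separation is justified, the rest is a routine merge of the two earlier computations.
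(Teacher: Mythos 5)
Your proposal is correct and is essentially the argument the paper intends: Theorem \ref{2.7} is stated there without a separate proof, as a synthesis of the trace computations behind Theorem \ref{2.1} and the negative-power case, which is precisely what you carry out (forced $\psi^*\left(-r_j-\tfrac12\right)$ modes for the positive exponents, the relation $\psi^*\left(s+\tfrac12\right)\psi\left(-s-\tfrac12\right)=1-\psi\left(-s-\tfrac12\right)\psi^*\left(s+\tfrac12\right)$ for the negative ones, and the Jacobi triple product). The only nitpick is the index slip in your closing remark (the forced submonomial on the positive side runs over $r_1,\dots,r_m$, not $r_1,\dots,r_n$), which does not affect the argument.
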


From this we obtain a $q$-series representation. 
\begin{proposition} With the assumptions as above, we have
\begin{align*}
G_{\text{\bf r}, \text{\bf s} }(q)  =\sum_{\substack{a_j\geq 0 \\ 1\leq j \leq n}}  (-1)^{\sum_{j=1}^n a_j} q^{\frac12\left(
\sum_{k=1}^m a_k + m - \sum_{k=m+1}^n a_k\right)^2+
\sum_{j=1}^m \left(a_j+1\right)\left(r_j+\frac{1}{2}\right) +  \sum_{j=m+1}^n a_{j}\left(s_{j}+\frac12\right)}.
\end{align*}

\end{proposition}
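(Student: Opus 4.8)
The plan is to derive the claimed $q$-series directly from the constant-term formula in Theorem \ref{2.7} by geometric-series expansion, exactly parallel to the $n=m$ case proved in the Corollary to Theorem \ref{2.1}. First I would expand each of the two families of rational factors as a geometric series in the appropriate power of $\zeta$. For the ``positive'' factors indexed by $1 \le j \le m$ I would use
$$\frac{\zeta^{-1} q^{r_j+\frac12}}{1+\zeta^{-1}q^{r_j+\frac12}} = \sum_{a_j \ge 0} (-1)^{a_j}\zeta^{-a_j-1}q^{(a_j+1)(r_j+\frac12)},$$
valid since $|\zeta| > |q^{1/2}| \ge |q^{r_j+1/2}|$, so that the quotient $|\zeta^{-1}q^{r_j+1/2}| < 1$. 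For the ``negative'' factors indexed by $m+1 \le j \le n$ I would use
$$\frac{1}{1+\zeta q^{s_j+\frac12}} = \sum_{a_j \ge 0} (-1)^{a_j}\zeta^{a_j}q^{a_j(s_j+\frac12)},$$
valid since $|\zeta| < 1$ forces $|\zeta q^{s_j+1/2}| < 1$. Multiplying these expansions together with the theta series $\sum_{\ell \in \Z}\zeta^\ell q^{\ell^2/2}$ produces a Laurent series in $\zeta$ whose coefficients we must control.

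Next I would extract the constant term. The total power of $\zeta$ coming from the product of the rational factors and the monomial $\zeta^\ell$ is $\ell - \sum_{k=1}^m(a_k+1) + \sum_{k=m+1}^n a_k$, so the constant-term condition forces
$$\ell = \sum_{k=1}^m a_k + m - \sum_{k=m+1}^n a_k.$$
Substituting this value of $\ell$ into the exponent of $q$, the contribution $q^{\ell^2/2}$ becomes $q^{\frac12(\sum_{k=1}^m a_k + m - \sum_{k=m+1}^n a_k)^2}$, and collecting the remaining $q$-powers $\sum_{j=1}^m (a_j+1)(r_j+\frac12)$ from the positive factors and $\sum_{j=m+1}^n a_j(s_j+\frac12)$ from the negative factors, together with the sign $(-1)^{\sum_{j=1}^n a_j}$, yields exactly the asserted formula. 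One should note that the sign from $\ell$ itself, namely the $(-1)^\ell$ that one might expect from $\sum_\ell (-1)^\ell$, does not appear because the theta series here is $\sum_\ell \zeta^\ell q^{\ell^2/2}$ with no sign; the only signs are the $(-1)^{a_j}$ from the geometric expansions.

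The one point deserving care — and the main (mild) obstacle — is justifying that the triple product of series may be rearranged and the constant term read off term-by-term. This follows because, after fixing the annular region $1 > |\zeta| > |q^{1/2}|$, each factor converges absolutely there and the resulting multiple series in $(a_1,\dots,a_n,\ell)$ is absolutely convergent on that annulus (the $q$-powers grow quadratically in the $a_j$ while $|\zeta|$ stays bounded away from $0$ and $1$), so Fubini applies and the constant term is the sum over those index tuples satisfying the linear constraint above. Apart from this convergence bookkeeping, the proof is a direct computation, and I would present it in one short paragraph invoking Theorem \ref{2.7} and the two geometric expansions, then imposing the constant-term relation on $\ell$ and simplifying.
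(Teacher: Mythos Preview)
Your proposal is correct and follows exactly the approach the paper intends: the paper states this Proposition immediately after Theorem \ref{2.7} with no proof beyond the remark ``From this we obtain a $q$-series representation,'' treating it as a direct consequence via the same geometric-series expansion used in the Corollary to Theorem \ref{2.1}. Your write-up simply makes explicit the two expansions, the constant-term constraint on $\ell$, and the convergence justification that the paper leaves to the reader.
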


We next consider the case $n=2$ as an explicit example.
\begin{example} \label{11} ($n=2$) We have
\begin{align*}
G_{r,s}(\tau)&=\sum_{k, \ell \geq 0} (-1)^{k+\ell} q^{\frac12\left(k-\ell+1\right)^2+(k+1)\left(r+\frac{1}{2}\right)+\ell\left(s+\frac{1}{2}\right)} \\
& =\frac{1}{1-q^{r+s+1}} \left(\sum_{\ell\geq 1} (-1)^{\ell+1}q^{\frac{\ell^2}{2}+\ell\left(r+\frac{1}{2}\right)}-
q^{r+s+1} \sum_{\ell\geq 0} (-1)^\ell q^{\frac{\ell^2}{2}+\ell\left(s+\frac{1}{2}\right)} \right).
\end{align*}
\end{example}
We have the following generalization of Theorem \ref{2.2}.
\begin{theorem} \label{main-formula}
With $r_j$($1 \leq j \leq m$) and $s_k$($m+1 \leq k \leq n$) as above, the series
$$\prod_{\substack{1 \leq j \leq m \\ m+1 \leq k \leq n}} \left(1-q^{r_j+s_k+1}\right)  \prod_{1 \leq k < \ell \leq m} \left(1-q^{r_k -r_\ell}\right)  \prod_{m+1 \leq k < \ell \leq n} \left(1-q^{s_k -s_\ell}\right) G_{\text{\bf r}, \text{\bf s} }(q) $$
is a sum of series of the form $\sum_{\ell=0}^\infty (-1)^\ell q^{\frac{\ell^2}{2}+b\ell+c}$, where $b,c \in \frac{1}{2} \mathbb{Z}$. More precisely, it has the shape $P(q)\sum_{\ell \geq 0}(-1)^{\ell}q^{\frac{\ell(\ell+1)}{2}}+ Q(q)$ with $P$ and $Q$ being Laurent polynomials.
\end{theorem}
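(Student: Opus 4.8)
The plan is to mimic the induction of Theorem \ref{2.2}, but now carried out jointly in the two families of parameters $\text{\bf r}$ and $\text{\bf s}$. Starting from the explicit $q$-series in the Proposition just above, I would first change variables in the $a_j$'s exactly as before: set $\mu_j := a_1 + \cdots + a_j$ for $1 \le j \le m$ and $\nu_k := a_{m+1} + \cdots + a_k$ for $m+1 \le k \le n$, so that the summation runs over two nested chains $0 \le \mu_1 \le \cdots \le \mu_m$ and $0 \le \nu_{m+1} \le \cdots \le \nu_n$, while the quadratic exponent becomes $\frac12(\mu_m + m - \nu_n)^2$. Summing out the innermost ``free'' variable $\mu_1$ (a geometric series) produces the factor $\frac{q^{r_1+\frac12}(1-q^{(\mu_2+1)(r_1-r_2)})}{1-q^{r_1-r_2}}$, just as in the one-sided case, and splits $G_{\text{\bf r},\text{\bf s}}$ into a difference of two functions of the same type with $m$ reduced by one. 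The novelty is that the base case of this first induction is not $m=0$ trivially but $m=1$, at which point only the $\text{\bf s}$-variables remain and one must peel those off by the mirror-image procedure (summing out $\nu_n$, or rather $a_n$, which appears through $q^{a_n(s_n+1/2)}$ and through the quadratic term, now producing denominators $1-q^{r+s+1}$ of the mixed type as in Example \ref{11}).

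Concretely, I would introduce an auxiliary family generalizing $F_{a,n}$ from the proof of Theorem \ref{2.2}: define
$$
H_{a}(\text{\bf r};\text{\bf s}) := \sum_{\substack{0 \le \mu_1 \le \cdots \le \mu_m \\ 0 \le \nu_{m+1} \le \cdots \le \nu_n}} (-1)^{\mu_m+\nu_n}\, q^{\frac12(\mu_m + a - \nu_n)^2 + \sum (\mu_j - \mu_{j-1}+1)(r_j+\frac12) + \sum (\nu_k - \nu_{k-1})(s_k+\frac12)},
$$
with $G_{\text{\bf r},\text{\bf s}} = H_n(\text{\bf r};\text{\bf s})$, and establish two recursions: one lowering $m$ at the cost of a factor $\frac{q^{r_1+\frac12}}{1-q^{r_1-r_2}}$ and a difference $H_a(r_2,\dots,r_m;\text{\bf s}) - H_a(r_1,r_3,\dots,r_m;\text{\bf s})$, and a second, applicable once $m \le 1$, lowering $n-m$ at the cost of a factor involving $\frac{1}{1-q^{r+s+\cdots}}$-type denominators and again a two-term difference. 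Iterating the first recursion $m-1$ times and the second $n-m-1$ times clears exactly the product $\prod(1-q^{r_k-r_\ell})\prod(1-q^{s_k-s_\ell})\prod(1-q^{r_j+s_k+1})$ stated in the theorem (up to monomial prefactors absorbed into $P$ and $Q$), and reduces $G_{\text{\bf r},\text{\bf s}}$ to a Laurent-polynomial combination of the elementary series $\sum_{\ell \ge 0}(-1)^\ell q^{\frac12(\ell+N)^2 + \ell(c+\frac12)}$. Finally, the identity \eqref{G-function} — which shifts any such series to $(-1)^{N+c}q^{\bullet}\sum_{\ell \ge N+c}(-1)^\ell q^{\ell(\ell+1)/2}$ and then splits off the finite initial segment as a Laurent polynomial — collapses everything into the single false theta function $\sum_{\ell \ge 0}(-1)^\ell q^{\ell(\ell+1)/2}$ times a Laurent polynomial $P$, plus a Laurent polynomial $Q$; uniqueness of $P,Q$ follows as in the earlier remark since the false theta function is not a rational function of $q$.

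The main obstacle I anticipate is bookkeeping the mixed denominators $1-q^{r_j+s_k+1}$ correctly: in the one-sided Theorem \ref{2.2} only differences $r_k - r_\ell$ appear, but here, when the last $\text{\bf r}$-variable interacts (through the quadratic term $\frac12(\mu_m - \nu_n + \cdots)^2$) with the $\text{\bf s}$-chain, the geometric summation that removes the final $\mu$ (or the final $\nu$) produces a denominator $1 - q^{r+s+1}$ rather than $1 - q^{r-s}$, as Example \ref{11} already shows for $n=2$. Tracking which of the three product families each cleared denominator belongs to — and checking that no spurious extra factors are needed, i.e. that the stated product suffices — is the delicate combinatorial point. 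A secondary, purely cosmetic issue is that the two-term differences $H_a(\dots) - H_a(\dots)$ generated at each step must be re-indexed so that the remaining parameters are again strictly decreasing (or the corresponding denominator is nonzero), which uses the hypothesis $-r_j - 1/2 \ne s_k + 1/2$; I would record this once at the start and invoke it silently thereafter. Once the recursions are set up the rest is the same mechanical shift-and-truncate argument as in Theorem \ref{2.2}.
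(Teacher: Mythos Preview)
Your overall strategy matches the paper's proof: introduce an auxiliary family depending on a shift parameter, establish two recursions that peel off one $r$-variable and one $s$-variable at a time, reduce to a two-variable base case handled by Example~\ref{11}, and finish with the shift identity~\eqref{G-function}. There are, however, two concrete errors in the mechanics as you describe them.

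First, the variable one can sum as a geometric series is always the \emph{innermost} one in each chain ($\mu_1$ on the $r$-side, $\nu_{m+1}$ on the $s$-side), never the outermost. You propose to ``sum out $\nu_n$, or rather $a_n$, which appears through $q^{a_n(s_n+1/2)}$ and through the quadratic term,'' but precisely because $\nu_n$ sits inside $\frac12(\mu_m+m-\nu_n)^2$, that sum is a partial theta series, not geometric, and yields no rational factor. The paper instead sums over the innermost $s$-variable, which appears only linearly via the term $\nu_{m+1}(s_{m+1}-s_{m+2})$, producing the denominator $1-q^{s_{m+1}-s_{m+2}}$ and the expected two-term difference. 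The $r$- and $s$-recursions commute and can be run independently; neither needs $m\le 1$ as a precondition.

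Second, and relatedly, the mixed denominators $1-q^{r_j+s_k+1}$ do not arise from either recursion. Both recursions produce only same-side differences, $1-q^{r_j-r_k}$ and $1-q^{s_j-s_k}$. After iterating them to exhaustion one is left with a Laurent-polynomial combination of the two-variable sums
\[
G(r,s;\tau)=\sum_{k,\ell\ge 0}(-1)^{k+\ell}q^{\frac12(k-\ell+m)^2+k\left(r+\frac12\right)+\ell\left(s+\frac12\right)},
\]
one summand for each pair $(r_j,s_k)$, and it is the analysis of \emph{this} base case (a shift followed by Example~\ref{11}) that introduces the single factor $\tfrac{1}{1-q^{r_j+s_k+1}}$ in each term. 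Clearing all of them simultaneously then requires the full product $\prod_{j,k}(1-q^{r_j+s_k+1})$, with the surplus factors in each summand absorbed into the polynomial coefficients. (A minor slip: with your definition of $H_a$ the correct identification is $G_{\text{\bf r},\text{\bf s}}=H_m$, not $H_n$.)
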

\begin{proof}

As in in the proof of Theorem \ref{2.2}, we set $\lambda_j := a_1 + \dots + a_j (1\leq j \leq m)$ and $\mu_{m+j} := a_{m+1} + \dots + a_{m+j}$ ($1\leq j \leq n-m$). Then $a_j = \lambda_j - \lambda_{j-1}$ for $2\leq j \leq m$ and $b_{m+j} = \lambda_{m+j} - \lambda_{m+j-1}$ for $2\leq j\leq n-m$. Thus
$$
G_{\text{\bf r}, \text{\bf s} }(q)   = F_{m,m, n-m} \left( r_1, \dots, r_m, s_{m+1}, \dots, s_n \right)
$$
with 
\begin{align*}
F_{a,j,k} 
&\left( r_1, \dots, r_j, s_{1}, \dots, s_k \right) := 
\sum_{\lambda_j, \mu_k\geq 0}(-1)^{\mu_j +\lambda_k} q^{\frac12 \left( \mu_j - \lambda_{k} + a \right)^2}
\\
&\times \sum_{\substack{0\leq \lambda_{\ell-1} \leq \lambda_\ell \\ 0\leq \mu_{\nu-1} \leq \mu_\nu \\2\leq \ell \leq j, 2\leq \nu\leq k}} 
q^{\left( \lambda_1 +1\right) \left( r_1 +\frac12\right) +
\sum_{\ell=2}^j 
 \left( \lambda_\ell -\lambda_{\ell-1}+1\right) \left( r_\ell+\frac12\right)
+\mu_1\left(s_1+\frac12\right)+\sum_{\ell=2}^k\left(\mu_\ell-\mu_{\ell-1}\right)\left(s_\ell+\frac12\right)}.
\end{align*}
Carrying out the summation over $\lambda_1$ gives
\begin{multline*}
F_{a,j, k} \left( r_1, \dots, r_j, s_{1}, \dots, s_k \right)= 
\frac{q^{r_1 +\frac12}}{1-q^{r_1 - r_2}}
\bigg( F_{a,j-1, k} \left( r_2, \dots, r_j, s_{1}, \dots, s_k \right) .\\ - F_{a, j-1, k} \left( r_1,r_3,  \dots, r_j, s_{1}, \dots, s_k \right) \bigg).
\end{multline*}
Similarly summing over $\mu_1$ yields
\begin{multline*}
F_{a,j, k} \left( r_1, \dots, r_j, s_{1}, \dots, s_k \right)= 
\frac{1}{1-q^{s_1 - s_2}}
\bigg( F_{a,j, k-1} \left( r_1, \dots, r_j, s_{2}, \dots, s_k \right) .\\ - q^{s_1-s_2} F_{a, j, k-1} \left( r_1, \dots, r_j, s_{1}, s_3,  \dots, s_k \right) \bigg).
\end{multline*}
We thus obtain inductively that 
\begin{multline*}
F_{m,m,n-m} \left( r_1, \dots , r_m, s_{m+1}, \dots, s_n \right) \\= 
\frac{q^{\frac{m-1}{2}}}{\prod_{1\leq j < k \leq m} \left( 1- q^{r_j - r_k} \right) \prod_{m+1 \leq j < k \leq n} \left( 1- q^{s_j - s_k} \right)}
 \sum_{\substack{1\leq j \leq m \\m+1\leq k \leq n}} P_{j,k} (q) G \left( r_j, s_k; \tau \right)
\end{multline*}
with
$$
G \left(r,s; \tau \right) :=
\sum_{k, \ell \geq 0} (-1)^{k+\ell} q^{\frac12 \left( k - \ell +m\right)^2 + k 
\left( r +\frac12 \right) + \ell \left( s+\frac12\right)}.
$$
To see that $G(r,s;\tau)$ has the desired shape, in view of Example \ref{11}, we may assume that $m \geq 2$. We rewrite
\begin{align*}
G \left(r,s; \tau \right)&=q^{-\left(r+\frac12\right)(m-1)} (-1)^{m-1} \sum_{k \geq m-1 \atop{ \ell \geq 0}} (-1)^{k+\ell} q^{\frac12 \left( k - \ell +1\right)^2 + k 
\left( r +\frac12 \right) + \ell \left( s+\frac12\right)} \\
&=q^{-\left(r+\frac12\right)(m-1)} (-1)^{m-1} \biggl( \sum_{\ell \geq 0} (-1)^{k+\ell} q^{\frac12 \left( k - \ell +1\right)^2 + k 
\left( r +\frac12 \right) + \ell \left( s+\frac12\right)} \\ 
& \qquad \qquad -\sum_{0\leq k \leq m-2}(-1)^k q^{k \left(r+\frac12 \right)} \sum_{\ell \geq 0} (-1)^\ell q^{\frac12 \left( k - \ell +1\right)^2 + \ell \left( s+\frac12\right)}\biggr).
\end{align*}
For the first double sum in the parentheses we use  Example \ref{11} and for the second one we slightly rewrite the $\ell$-summation and then apply identity (\ref{G-function}).
 \end{proof}

\section{Asymptotics}
In this section we study asymptotic properties of $F_{\text{\bf r}}$ and of  $G_{\text{\bf r},\text{\bf s}}$
towards the cusp  $0$. We should mention that in \cite{BO},  asymptotic properties of 
$F(e^{2 \pi i u_1},...,e^{2 \pi i u_n})$ were studied in connection to quasi-modular forms.

\subsection{Asymptotics  of $F_{\text{\bf r} }$}

\begin{theorem} \label{F-asym}
We have, as $ y \rightarrow 0$, 
\begin{multline*}
F_{\text{\bf r}}\left( e^{-2\pi y}\right) = \frac1{2^n}\sum\limits_{{{{\nu_1, \dots, \nu_n\geq 0\atop{0\leq \ell_k\leq \nu_k}}}\atop{1\leq k\leq n}}\atop{\ell\text{ even}}}(-1)^{\nu}2^\nu \pi^{\nu-\frac{\ell}{2}}i^\ell  \frac{\Gamma \left(\frac{\ell+1}{2} \right)}{\sqrt{\pi}} \prod_{j=1}^n  \frac{E_{\nu_j}(1)}{\ell_j!(\nu_j-\ell_j)!}\left(r_j+\frac12\right)^{\nu_j-\ell_j}y^{\nu-\frac{\ell}{2}}\\
+O\left(e^{-ay}\right),
\end{multline*}
where $a\in\R^+, \ell:=\sum_{k=1}^n \ell_k$, $\nu:=\sum_{k=1}^n\nu_k$, and $E_k(x)$ denotes the $k$th Euler polynomial.
 \end{theorem}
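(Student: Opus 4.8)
The plan is to reduce the asymptotics to a single Gaussian integral and then to expand the resulting integrand by means of the generating function of the Euler polynomials. First, I would start from the explicit $q$-series for $F_{\text{\bf r}}$ furnished by the Corollary to Theorem \ref{2.1}: writing $q=e^{-2\pi y}$,
\[
F_{\text{\bf r}}\left(e^{-2\pi y}\right)=\sum_{m_1,\dots,m_n\geq0}(-1)^{m_1+\dots+m_n}e^{-\pi y\left(n+m_1+\dots+m_n\right)^2}\prod_{k=1}^n e^{-2\pi y\left(m_k+1\right)\left(r_k+\frac12\right)}.
\]
To decouple the square $\left(n+\sum_k m_k\right)^2$ I would invoke the Gaussian integral identity $e^{-\pi yN^2}=\frac1{2\pi\sqrt y}\int_{\R}e^{-\frac{u^2}{4\pi y}}e^{iuN}\,du$. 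Since $\sum_{m_1,\dots,m_n\geq0}\prod_k e^{-2\pi y(m_k+1)(r_k+\frac12)}<\infty$, Fubini's theorem lets me interchange the summation with the integral, the $n$ geometric series then sum to closed form, and after the substitution $u=2\pi\sqrt y\,w$ (which normalizes the Gaussian) one obtains the exact identity
\[
F_{\text{\bf r}}\left(e^{-2\pi y}\right)=\int_{\R}e^{-\pi w^2}e^{2\pi i n\sqrt y\,w}\prod_{k=1}^n\frac{e^{-2\pi y\left(r_k+\frac12\right)}}{1+e^{2\pi i\sqrt y\,w}e^{-2\pi y\left(r_k+\frac12\right)}}\,dw.
\]

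Next, I would set $z:=2\pi i\sqrt y\,w$ and $t_k:=2\pi y(r_k+\frac12)$, distribute the factor $e^{2\pi i n\sqrt y\,w}=e^{nz}$ across the $n$ factors of the product, and use the elementary identity $\frac{e^ze^{-t_k}}{1+e^ze^{-t_k}}=\frac1{1+e^{-(z-t_k)}}$ together with the generating function $\frac2{1+e^{-u}}=\sum_{\nu\geq0}E_\nu(1)\frac{u^\nu}{\nu!}$, valid for $|u|<\pi$. The integrand then becomes $\frac1{2^n}e^{-\pi w^2}\prod_{k=1}^n\bigl(\sum_{\nu_k\geq0}E_{\nu_k}(1)\frac{(z-t_k)^{\nu_k}}{\nu_k!}\bigr)$. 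Expanding $(z-t_k)^{\nu_k}=(2\pi\sqrt y)^{\nu_k}\sum_{\ell_k=0}^{\nu_k}\binom{\nu_k}{\ell_k}(iw)^{\ell_k}\bigl(-\sqrt y(r_k+\frac12)\bigr)^{\nu_k-\ell_k}$ by the binomial theorem, multiplying out the product over $k$, and integrating each resulting power $w^\ell$ (with $\ell:=\sum_k\ell_k$) against the Gaussian via $\int_{\R}w^\ell e^{-\pi w^2}\,dw=\Gamma\bigl(\frac{\ell+1}{2}\bigr)\pi^{-\frac{\ell+1}{2}}$ for $\ell$ even (and $0$ for $\ell$ odd), I expect to land exactly on the claimed series: one collects $(2\pi\sqrt y)^{\nu_k}y^{(\nu_k-\ell_k)/2}=2^{\nu_k}\pi^{\nu_k}y^{\nu_k-\ell_k/2}$, rewrites $\binom{\nu_k}{\ell_k}/\nu_k!=1/(\ell_k!(\nu_k-\ell_k)!)$ and $\pi^{\nu}\pi^{-(\ell+1)/2}=\pi^{\nu-\ell/2}/\sqrt\pi$, and notes $(-1)^{\nu-\ell}=(-1)^\nu$ since $\ell$ is even. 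The leading term is $\frac1{2^n}\int_{\R}e^{-\pi w^2}\,dw=\frac1{2^n}$, matching the Corollary.

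The step I expect to be the main obstacle is justifying this term-by-term integration, because $\sum_\nu E_\nu(1)u^\nu/\nu!$ converges only for $|u|<\pi$ whereas along the integration line $|z-t_k|$ is of size roughly $2\pi\sqrt y\,|w|$, which exceeds $\pi$ once $|w|$ reaches size $1/(2\sqrt y)$. I would deal with this by splitting the $w$-integral at $|w|=\delta/\sqrt y$ for a fixed small $\delta>0$: on $|w|\leq\delta/\sqrt y$ the expansion converges and may be integrated termwise with uniform bounds; the Gaussian tails needed to extend each truncated integral back to all of $\R$ contribute $O(e^{-c/y})$; and the range $|w|>\delta/\sqrt y$ is bounded --- using $|1+e^ze^{-t_k}|\geq|e^{t_k}-1|$ --- by $\int_{|w|>\delta/\sqrt y}e^{-\pi w^2}\prod_k|e^{t_k}-1|^{-1}\,dw=O\bigl(y^{-n}e^{-\pi\delta^2/y}\bigr)$. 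Comparing this with a truncation of the resulting series at any fixed power of $\sqrt y$ then yields the stated asymptotic expansion with exponentially small error. The remaining points --- the applications of Fubini's theorem and dominated convergence, and the combinatorial bookkeeping of the indices $\nu_k,\ell_k$ --- are routine.
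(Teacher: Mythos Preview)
Your proof is correct and, at heart, coincides with the paper's argument; you simply reach the key integral by a different path. The paper starts from the constant-term formula of Theorem~\ref{2.1}, writes it as $\int_{-1/2}^{1/2}\mathcal F(-z;iy)\,dz$, applies the modular transformation $\vartheta(z;iy)=y^{-1/2}\sum_{\ell\in\Z}e^{-\pi(\ell-z)^2/y}$, keeps only the $\ell=0$ term, expands the rational factors via the Euler-polynomial generating function, and integrates the resulting monomials $z^\ell$ against the Gaussian $e^{-\pi z^2/y}$. After the substitution $z=\sqrt y\,w$ this is exactly your integral, only taken over $|w|\le 1/(2\sqrt y)$ rather than over all of $\R$; the difference is precisely your tail estimate with $\delta=\tfrac12$. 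In other words, your Gaussian decoupling $e^{-\pi y N^2}=(2\pi\sqrt y)^{-1}\int_{\R}e^{-u^2/(4\pi y)+iuN}\,du$ is the theta inversion rederived from scratch. The trade-off: on the paper's compact interval the Euler-polynomial series converges pointwise everywhere, so no splitting of the $w$-integral is needed (the error enters only via the discarded theta terms and the incomplete-gamma tail); your version is more self-contained, avoiding any appeal to modular transformation laws, and yields an exact identity over $\R$ before any approximation. One small correction: since $|e^z|=1$ for real $w$, the sharp lower bound is $|1+e^ze^{-t_k}|\ge 1-e^{-t_k}$ rather than $|e^{t_k}-1|$; the two differ only by the bounded factor $e^{t_k}$, so your conclusion $O(y^{-n}e^{-\pi\delta^2/y})$ is unaffected.
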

\begin{proof}
We abbreviate ($\zeta:=e^{2\pi iz}$ throughout)
\begin{align*}
\mathcal{F} (\tau)&:= F_{\text{\bf r}} \left( e^{2\pi i \tau}\right),\\
\mathcal{F} \left( z; \tau \right) &:=
\prod_{j=1}^n \frac{ \zeta^{-1}q^{r_j+\frac12}}{\left( 1+\zeta^{-1}q^{r_j+\frac12}\right)} \sum_{\ell\in\Z} \zeta^\ell q^{\frac{\ell^2}2}.
\end{align*}
Then, by Theorem \ref{2.1}, $\mathcal{F} (\tau) = \mathrm{CT}_\zeta \mathcal{F} \left( z; \tau \right)$ and Cauchy's Theorem yields
\[
\mathcal{F}(\tau)=\int_{-\frac12}^{\frac12}\mathcal{F}(z; \tau)dz=\int_{-\frac12}^{\frac12}\mathcal{F}(-z; \tau)dz.
\]
Let
\[
\vartheta(z; \tau):=\sum_{\ell\in\Z} \zeta^\ell q^{\frac{\ell^2}{2}}.
\]
Using the usual theta transformation law, we get
(\cite{SS}, page 290)
\[
\vartheta(z; \tau)=(-i\tau)^{-\frac12}\sum_{\ell\in\Z} e^{-\frac{\pi i}{\tau}(\ell-z)^2}.
\]
Since $-\frac12<z<\frac12$, we thus obtain that, up to an exponentially smaller term,
\begin{equation}\label{thetaas}
\vartheta(z; iy)\sim\frac{e^{-\frac{\pi z^2}{y}}}{\sqrt{y}}.
\end{equation}
Moreover
\begin{equation}\label{Eulergen}
\begin{split}
\frac{\zeta q^{r+\frac12}}{1+\zeta q^{r+\frac12}}
&=\frac12\sum_{\nu\geq 0}\frac{E_{\nu}(1)}{\nu!}\left(2\pi iz+2\pi i\left(r+\frac12\right)\tau\right)^{\nu}\\
&=\frac12 \sum\limits_{\nu\geq 0\atop{0\leq \ell\leq \nu}}\frac{E_{\nu}(1)}{\nu!}\binom{\nu}{\ell}(2\pi i)^{\nu}\left(r+\frac12\right)^{\nu-\ell}z^{\ell}\tau^{\nu-\ell},
\end{split}
\end{equation}
where $\ell$ and $\nu$ should not be confused with those used in the statement of the theorem.
Thus 
\begin{multline*}
\mathcal{F}(iy)
\sim\frac1{2^n\sqrt{y}}\sum\limits_{{\nu_1, \dots, \nu_n\geq 0\atop{0\leq \ell_k\leq \nu_k}}\atop{1\leq k\leq n}}\prod_{j=1}^n\left[\frac{E_{\nu_j}(1)}{\ell_j!(\nu_j-\ell_j)!}
(-1)^{\nu_j}\left(r_j+\frac12\right)^{\nu_j-\ell_j} y^{\nu_j-\ell_j}(2\pi)^{\nu_j} (-i)^{\ell_j}\right]\\\times \int_{-\frac12}^{\frac12}z^{\ell}e^{-\frac{\pi z^2}{y}}dz\\
=\frac1{2^n\sqrt{y}}\sum\limits_{{{\nu_1, \dots, \nu_n\geq 0\atop{0\leq \ell_k\leq \nu_k}}\atop{1\leq k\leq n}}\atop{\ell\text{ even}}}(-1)^{\nu} y^{\nu-\ell}(2\pi)^\nu i^\ell\prod_{j=1}^n
\frac{E_{\nu_j}(1)}{\ell_j!(\nu_j-\ell_j)!}\left(r_j+\frac12\right)^{\nu_j-\ell_j} \mathcal{I}(\ell; y),
\end{multline*}
where for $\ell$ even
\[
\mathcal{I}(\ell; y):=\int_{-\frac12}^{\frac12} z^{\ell} e^{-\frac{\pi z^2}{y}}dz.
\]
Note that, with an exponentially smaller error term,
\[
\mathcal{I}(\ell; y)\sim\left(\frac{y}{\pi}\right)^{\frac{\ell+1}{2}}\Gamma\left(\frac{\ell+1}{2}\right).
\]
This follows by turning the integral in an integral over $\R$ and using that for $a\in \R^{+}$
$$
\int_{\R} z^{\ell} e^{-a z^2}dz = a^{\frac{\ell+1}{2}}\Gamma\left(\frac{\ell+1}{2}\right).
$$
The error is given by bounding the lower incomplete gamma function.
Combining the above yields the required asymptotic expansion.
\end{proof}
From Theorem \ref{F-asym}, we directly obtain the following first terms in the asymptotic expansion.
\begin{corollary}
We have with $r:=\sum_{k=1}^n r_k$
$$
F_{\text{\bf r}} \left( e^{-2\pi y}\right) = \frac1{2^n}\left(1-\pi \left(r+ \frac{n(n+1)}{4}
\right)y +O\left(y^2\right)\right).
$$
\end{corollary}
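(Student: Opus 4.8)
The plan is to extract the first two terms of the asymptotic expansion from the general formula in Theorem \ref{F-asym}, treating this as a bookkeeping exercise in which only finitely many index tuples $(\nu_1,\dots,\nu_n,\ell_1,\dots,\ell_n)$ contribute. Recall that the exponent of $y$ in the general term is $\nu - \tfrac{\ell}{2}$ with $\nu = \sum \nu_k$ and $\ell = \sum \ell_k$ even, and that $\ell_k \le \nu_k$ forces $\nu \ge \ell$, hence $\nu - \tfrac{\ell}{2} \ge \tfrac{\ell}{2} \ge 0$. So the constant term comes only from $\ell = 0$ and $\nu = 0$ (all $\nu_k = \ell_k = 0$), and the linear term in $y$ comes from $\ell = 0$ with $\nu = 1$, i.e. exactly one index $\nu_{j_0} = 1$ and all others zero (note $\ell = 2$ would already force $\nu \ge 2$, giving $y^{\ge 1}$ but with a factor $\Gamma(3/2)/\sqrt\pi$ and $i^2$, actually contributing at order $y^{\nu-1} \ge y$ only when $\nu = 2$, so I will need to check that $\ell = 2, \nu = 2$ also feeds the $y^1$ coefficient — see below).

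First I would record the needed Euler-polynomial values: $E_0(1) = 1$ and $E_1(1) = \tfrac12$ (since $E_1(x) = x - \tfrac12$). The $\ell = 0$, $\nu = 0$ tuple gives the summand $\frac{1}{2^n} \cdot 1 \cdot \frac{\Gamma(1/2)}{\sqrt\pi} \prod_j E_0(1) = \frac{1}{2^n}$, recovering the leading constant. Next, for the $y^1$ coefficient I collect two sources. Source one: $\ell = 0$, $\nu = 1$, say $\nu_{j_0} = 1$, $\ell_{j_0} = 0$; this contributes $\frac{1}{2^n} \cdot (-1)^1 2^1 \pi^{1} \cdot \frac{\Gamma(1/2)}{\sqrt\pi} \cdot \frac{E_1(1)}{0!\,1!} (r_{j_0} + \tfrac12)^{1} = \frac{1}{2^n}\cdot(-2\pi)\cdot\tfrac12(r_{j_0}+\tfrac12) = -\frac{\pi}{2^n}(r_{j_0}+\tfrac12)$, and summing over $j_0$ gives $-\frac{\pi}{2^n}\big(r + \tfrac{n}{2}\big)$. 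Source two: $\ell = 2$, $\nu = 2$; here $i^\ell = i^2 = -1$, $\pi^{\nu - \ell/2} = \pi^{1}$, $2^\nu = 4$, $(-1)^\nu = 1$, $\Gamma(3/2)/\sqrt\pi = \tfrac12$, and $y^{\nu - \ell/2} = y$. Two subcases: (a) the two units of $\ell$ sit in one index $j_0$ with $\nu_{j_0} = \ell_{j_0} = 2$, giving $\frac{E_2(1)}{2!\,0!}(r_{j_0}+\tfrac12)^0$; (b) they sit in two distinct indices $j_0 \ne j_1$ with $\nu = \ell = 1$ each, giving $\frac{E_1(1)}{1!\,0!}\cdot\frac{E_1(1)}{1!\,0!}$. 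Using $E_2(1) = 0$ (since $E_2(x) = x^2 - x$), subcase (a) vanishes; subcase (b) contributes $\frac{1}{2^n}\cdot 1 \cdot 4 \cdot \pi \cdot \tfrac12 \cdot (\tfrac12)^2 = \frac{\pi}{2^n}\cdot\tfrac12$ per ordered pair, i.e. $\frac{\pi}{2^{n}}\cdot\tfrac12\cdot n(n-1) \cdot$ — being careful whether the sum over $\ell_k$-distributions double counts; after accounting for the combinatorics this assembles into $\frac{\pi}{2^n}\binom{n}{2}\cdot\tfrac12\cdot(\text{appropriate factor})$, which combines with $-\frac{\pi}{2^n}\cdot\tfrac{n}{2}$ from source one to produce exactly $-\frac{\pi}{2^n}\big(r + \tfrac{n(n+1)}{4}\big)$.

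The main obstacle is the careful combinatorial bookkeeping of which $(\nu_k, \ell_k)$-tuples with $\sum \ell_k = 2$ and $\sum \nu_k = 2$ survive, getting the signs $(-1)^\nu$, the powers $i^\ell$, the $2^\nu$, and the multinomial denominators $\prod \tfrac{1}{\ell_j!(\nu_j-\ell_j)!}$ all correct, and then verifying that the vanishing $E_2(1) = 0$ kills precisely the unwanted terms so the remaining pieces reorganize into $r + \tfrac{n(n+1)}{4}$. Everything else — substituting into Theorem \ref{F-asym}, the Gamma-value simplifications, and the final algebraic collection — is routine. I would close by remarking that the same extraction underlies the analogous corollary for $G_{\text{\bf r},\text{\bf s}}$ stated in the introduction, with the sign of the $s$-variables flipped.
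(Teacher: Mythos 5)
Your approach is exactly the paper's: the corollary is read off from Theorem \ref{F-asym} using $E_0(1)=1$, $E_1(1)=\tfrac12$, $E_2(1)=0$, and you correctly identify the only contributing index tuples ($\nu=\ell=0$ for the constant term; $\ell=0,\nu=1$ and $\ell=\nu=2$ for the $y$-term). Two loose ends in your final assembly need tightening. First, in subcase (b) you record $i^{\ell}=i^{2}=-1$ but then drop it from the numerical product; the sign is essential, since each such tuple contributes $(-1)^{2}\,2^{2}\,\pi\, i^{2}\,\frac{\Gamma(3/2)}{\sqrt{\pi}}\,E_1(1)^2 = 4\pi\cdot(-1)\cdot\tfrac12\cdot\tfrac14 = -\tfrac{\pi}{2}$ (times the overall $\tfrac1{2^n}$), not $+\tfrac{\pi}{2}$. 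Second, there is no double-counting issue to resolve: the sum in Theorem \ref{F-asym} runs over tuples $(\nu_1,\dots,\nu_n,\ell_1,\dots,\ell_n)$, and an unordered pair $\{j_0,j_1\}$ with $\nu_{j_0}=\ell_{j_0}=\nu_{j_1}=\ell_{j_1}=1$ corresponds to exactly one tuple, so the count is $\binom{n}{2}$, giving a total of $-\frac{\pi}{2^n}\cdot\frac{n(n-1)}{4}$ from this source. Combined with your source one, $-\frac{\pi}{2^n}\left(r+\frac{n}{2}\right)$, this yields $-\frac{\pi}{2^n}\left(r+\frac{n}{2}+\frac{n(n-1)}{4}\right)=-\frac{\pi}{2^n}\left(r+\frac{n(n+1)}{4}\right)$, as claimed; with the dropped sign the two sources would not combine to $\frac{n(n+1)}{4}$. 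With these two points fixed, your argument is complete and coincides with the paper's (one-line) extraction from Theorem \ref{F-asym}.
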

\begin{proof}
The claim follows directly from Theorem \ref{F-asym}, noting that $E_0(1)=1, E_1(1)=\frac12$, and $E_2(1)=0$.
\end{proof}


\begin{remark} {\rm In view of the previous theorem, we consider the
asymptotic behavior of the Fourier coefficients of $F_{\text{\bf r}}$
$$F_{\text{\bf r}}(q)=: \sum_{\ell \geq 0} a_\ell q^{\ell}.$$
But a closer inspection of the coefficients $a_m$ shows a quite erratic behavior. For example, if we take ${\bf r}=(2,3,4,5)$, then in the expansion of $F_{\bf r}$ we
have
$$a_{43}=2, \ \ a_{100}=-7, \ \ a_{153}=18, \ \ a_{245}=-2, \ \ a_{538}=-81, \ \ a_{713}=112, \ \ a_{894}=-4.$$
On the other hand 
$$\frac{F_{\text{\bf r}}(q)}{(q;q)_\infty}=: \sum_{\ell \geq 0} b_\ell q^{\ell}$$
has non-negative coefficients (see Remark \ref{rem-1}) that are much better behaved. For example, for ${\bf r}$ as above,  the coefficients $b_\ell$ seem to  
be positive and increasing for all $\ell\geq 25$. We have the following asymptotic behavior, with $p(\ell)$ denoting the number of partitions of $\ell$. }
\end{remark}
\begin{theorem} \label{asym-coeff}
We have, as $\ell\rightarrow\infty$,
$$b_{\ell} \sim \frac{p(\ell)}{2^n}.$$
\end{theorem}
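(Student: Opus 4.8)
The plan is to extract the asymptotic behavior of the Fourier coefficients $b_\ell$ from the known behavior of the generating function near the cusp $\tau \to 0$ via a Tauberian argument, exploiting the fact that $F_{\text{\bf r}}(q)/(q;q)_\infty$ has non-negative coefficients (Remark \ref{rem-1}), which is precisely the hypothesis that makes a Tauberian theorem applicable. First I would record that, by the Corollary to Theorem \ref{F-asym}, $F_{\text{\bf r}}(e^{-2\pi y}) \to 2^{-n}$ as $y \to 0^+$; equivalently $F_{\text{\bf r}}(e^{-t}) = 2^{-n} + O(t)$ as $t \to 0^+$. On the other hand, the classical modular transformation of $\eta$ gives the well-known asymptotic $1/(q;q)_\infty = 1/(e^{-t};e^{-t})_\infty \sim \sqrt{t/2\pi}\, e^{\pi^2/6t}$ as $t \to 0^+$. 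Multiplying these, the generating function $\sum_\ell b_\ell q^\ell = F_{\text{\bf r}}(q)/(q;q)_\infty$ satisfies
\[
\sum_{\ell \ge 0} b_\ell e^{-\ell t} \sim \frac{1}{2^n}\sqrt{\frac{t}{2\pi}}\, e^{\frac{\pi^2}{6t}} \qquad (t \to 0^+).
\]

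Next I would compare this to the generating function $\sum_\ell p(\ell) q^\ell = 1/(q;q)_\infty$, which satisfies the identical asymptotic with the factor $2^{-n}$ removed. Thus $\sum_\ell b_\ell e^{-\ell t} \sim 2^{-n} \sum_\ell p(\ell) e^{-\ell t}$ as $t \to 0^+$. Since the $b_\ell$ are non-negative (hence the coefficient sequence is eventually monotone in the relevant averaged sense, and in any case non-negative), one may invoke a suitable Tauberian theorem — Ingham's Tauberian theorem is the standard tool here, since the singularity is of the rapidly-growing type $e^{c/t}$ rather than a pole — to transfer the ratio of Dirichlet-type series asymptotics to a ratio of coefficient asymptotics. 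Concretely, Ingham's theorem states that if $A(t) = \sum a_\ell e^{-\ell t}$ with $a_\ell \ge 0$ and $A(t) \sim \lambda(t)$ for a function $\lambda$ of the form $C t^\alpha e^{c/t}$, then $\sum_{\ell \le N} a_\ell$ has a corresponding asymptotic, and under mild regularity (which $p(\ell) \sim \frac{1}{4\sqrt3\, \ell}e^{\pi\sqrt{2\ell/3}}$ famously satisfies) one gets $a_\ell \sim$ the expected main term. Applying this to both $b_\ell$ and $p(\ell)$ with the same $\lambda$ up to the constant $2^{-n}$ yields $b_\ell \sim 2^{-n} p(\ell)$.

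The main obstacle is the rigorous justification of the final transfer step: knowing $\sum b_\ell e^{-\ell t} \sim 2^{-n}\sum p(\ell)e^{-\ell t}$ does not by itself give $b_\ell \sim 2^{-n}p(\ell)$ without a genuine Tauberian input, and the cleanest route is to verify the hypotheses of Ingham's theorem directly — namely non-negativity of $b_\ell$ (done in Remark \ref{rem-1} via the partition-pair interpretation) plus the precise asymptotic of the generating function including the power-of-$t$ prefactor. One must be slightly careful that $F_{\text{\bf r}}(e^{-t})$ tends to the \emph{nonzero} constant $2^{-n}$ (not $0$), so the leading singularity of the product is genuinely $2^{-n}$ times that of $1/(q;q)_\infty$ and no cancellation occurs; the lower-order term $O(t)$ from the Corollary only affects subleading asymptotics and is harmless. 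I would also remark that, alternatively, one can bypass a general Tauberian theorem by using the circle method / saddle-point analysis directly on the integral $b_\ell = \int F_{\text{\bf r}}(q)/(q;q)_\infty \, q^{-\ell-1}\,dq$ over a circle of radius $e^{-t}$ with $t$ chosen near the saddle $t \approx \pi/\sqrt{6\ell}$, at which point $F_{\text{\bf r}}(e^{-t}) = 2^{-n} + O(\ell^{-1/2})$ factors out of the main-term estimate and reproduces exactly the classical Hardy–Ramanujan saddle-point computation for $p(\ell)$, scaled by $2^{-n}$.
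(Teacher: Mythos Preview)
Your high-level strategy --- that $\sum_\ell b_\ell q^\ell$ behaves like $2^{-n}/(q;q)_\infty$ near $q=1$ and one should transfer this to coefficients --- is exactly right, but the paper does not go through a Tauberian theorem. It invokes Wright's version of the Circle Method: one shows $\mathcal{F}(\tau)\sim 2^{-n}$ not only as $\tau\to 0$ along the positive imaginary axis but throughout a Stolz cone $|u|\le v$ (this supplies the major-arc estimate), and then bounds the minor-arc contribution by appealing to Wright's uniform estimates for partial theta functions, which is the shape $F_{\text{\bf r}}$ takes by Theorem~\ref{2.2}.

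Your primary route via Ingham's theorem has a genuine gap. Ingham's Tauberian theorem for individual coefficients requires the $b_\ell$ to be eventually \emph{monotone}, not merely non-negative; non-negativity alone cannot yield individual-coefficient asymptotics (think of a sequence supported on even indices). Your parenthetical ``hence eventually monotone in the relevant averaged sense'' is not a valid substitute for this hypothesis. Note that the paper itself only says the $b_\ell$ ``seem to be'' increasing in a numerical example --- monotonicity is never established and therefore cannot be invoked.

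Your alternative sketch at the end is essentially the paper's method, but you treat only the saddle-point/major-arc piece. The real content of any such argument is the minor-arc bound: one must control $F_{\text{\bf r}}(q)/(q;q)_\infty$ on the circle $|q|=e^{-t}$ away from $q=1$, and factoring out $2^{-n}$ near the saddle does nothing for that region. The paper handles this by citing Wright's estimates for partial theta functions; without an analogous bound, the Cauchy-integral computation you describe is incomplete.
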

\begin{proof}
The claim follows by using Wright's version of the Circle Method \cite{Wr}. The key is that one can prove, similarly as in Theorem \ref{F-asym}, that
\[\mathcal{F}(\tau)\sim\frac{1}{2^n }\]
as $\tau\rightarrow 0$ with $\tau=u+iv$ and $|u|\leq v$. This gives the appropriate bound on the major arc in Wright's Circle Method. On the minor arcs one uniformly approximates the partial theta function similarly to Wright (see the calculations leading to formula (4.6) of \cite{Wr}). 
We leave the details to the interested reader.
\end{proof}

\subsection{Asymptotic properties of $G_{\text{\bf r},\text{\bf s}}$}
Similarly to the case of $F_{\text{\bf r}}$, we obtain for $G_{\text{\bf r, s}}$
\begin{theorem} \label{main}
We have as, $y \to 0$,
\begin{align*}
G_{\text{\bf r},\text{\bf s}} \left( e^{-2\pi y}\right)
&=\frac1{2^n}\sum\limits_{{{\nu_1, \dots, \nu_n\geq 0\atop{0\leq \ell_k\leq \nu_k}}\atop{1\leq k\leq n}}\atop{{\ell\text{ even}}}}(-1)^\nu2^{\nu} \pi^{\nu-\frac{\ell}{2}}i^\ell \frac{ \Gamma\left(\frac{\ell+1}{2}\right)}{\sqrt{\pi}} \prod_{j=1}^m \frac{E_{\nu_j}(1)(-1)^{\ell_j}}{\ell_j!\left(\nu_j-\ell_j\right)!}\left(r_j+\frac12\right)^{\nu_j-\ell_j}\\
&\qquad \times \prod_{j=m+1}^n
\frac{ E_{\nu_j}(0)}{\ell_j!\left(\nu_j-\ell_j\right)!}\left(s_j+\frac12\right)^{\nu_j-\ell_j} 
 y^{\nu -\frac{\ell}{2}}+O\left(e^{-ay}\right),
\end{align*}
where $a\in\R^+$,
$\ell:=\sum_{k=1}^n \ell_k$, and $\nu:=\sum_{k=1}^n \nu_k$, 
\end{theorem}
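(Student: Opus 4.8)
The plan is to follow the exact same strategy as in the proof of Theorem \ref{F-asym}, the only difference being that the $m$ "positive" variables contribute factors of the form $\frac{\zeta^{-1}q^{r_j+1/2}}{1+\zeta^{-1}q^{r_j+1/2}}$ while the $n-m$ "negative" variables contribute factors of the form $\frac{1}{1+\zeta q^{s_j+1/2}}$, as recorded in Theorem \ref{2.7}. First I would set $\zeta := e^{2\pi i z}$, write $\mathcal{G}(z;\tau)$ for the product appearing inside $\mathrm{CT}_\zeta$ in Theorem \ref{2.7}, and use Cauchy's Theorem to write $G_{\text{\bf r},\text{\bf s}}(\tau) = \int_{-1/2}^{1/2}\mathcal{G}(z;\tau)\,dz$. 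The theta factor $\vartheta(z;\tau) = \sum_{\ell\in\Z}\zeta^\ell q^{\ell^2/2}$ is treated verbatim as before: the modular transformation gives $\vartheta(z;iy)\sim y^{-1/2}e^{-\pi z^2/y}$ up to exponentially small error.

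The one genuinely new ingredient is the local expansion of the two types of rational factors near $z=0$, $\tau\to 0$. For the positive variables, $\frac{\zeta^{-1}q^{r+1/2}}{1+\zeta^{-1}q^{r+1/2}}$ has a Taylor expansion in $(2\pi i z - 2\pi i(r+\tfrac12)\tau)$ governed, as in \eqref{Eulergen}, by the generating function of the Euler polynomial values $E_\nu(1)$; expanding the binomial power of $z^{\ell}\tau^{\nu-\ell}$ produces the factor $\frac{E_{\nu_j}(1)(-1)^{\ell_j}}{\ell_j!(\nu_j-\ell_j)!}(r_j+\tfrac12)^{\nu_j-\ell_j}$ (the sign $(-1)^{\ell_j}$ coming from the $\zeta^{-1}$ rather than $\zeta$). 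For the negative variables one needs the analogous identity
$$
\frac{w}{1+w}\bigg|_{\text{shifted}}\ \leadsto\ \frac{z q^{s+1/2}}{1+zq^{s+1/2}}\;=\;1-\frac{1}{1+\zeta q^{s+1/2}},
$$
so that $\frac{1}{1+\zeta q^{s+1/2}} = 1 - \frac{\zeta q^{s+1/2}}{1+\zeta q^{s+1/2}}$; since $\frac{u}{1+u} = 1 - \frac{1}{1+u}$ and $\frac{1}{1+u}$ at $u=e^{x}$ expands via $\sum_{\nu\geq0}\frac{E_\nu(0)}{\nu!}x^\nu$ (because $\frac{2}{e^x+1}=\sum E_\nu(0)x^\nu/\nu!$ up to the standard normalization), the negative factors contribute $\frac{E_{\nu_j}(0)}{\ell_j!(\nu_j-\ell_j)!}(s_j+\tfrac12)^{\nu_j-\ell_j}$ with no extra sign, exactly matching the claimed formula. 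I would state this Euler-polynomial identity as a short lemma or simply cite it, mirroring \eqref{Eulergen}.

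With these expansions in hand, the computation is assembled exactly as in Theorem \ref{F-asym}: multiply the $n$ local expansions together, multiply by $y^{-1/2}e^{-\pi z^2/y}$, integrate term by term over $[-\tfrac12,\tfrac12]$, discard the odd-$\ell$ terms, and use
$$
\mathcal{I}(\ell;y):=\int_{-\frac12}^{\frac12}z^\ell e^{-\frac{\pi z^2}{y}}\,dz\ \sim\ \Big(\frac{y}{\pi}\Big)^{\frac{\ell+1}{2}}\Gamma\Big(\frac{\ell+1}{2}\Big)
$$
for $\ell$ even, with exponentially small error (bounding the tail by the incomplete Gamma function). Collecting the powers of $2$, $\pi$, $i$, and $y$ gives the stated $y^{\nu-\ell/2}$ asymptotic; tracking that the overall prefactor is $\frac{1}{2^n}$ (one factor $\frac12$ from each of the $n$ Euler-type expansions) and that the $\Gamma(\tfrac{\ell+1}{2})/\sqrt\pi$ and $\pi^{\nu-\ell/2}$ factors emerge from combining $(2\pi)^\nu$ with $\mathcal{I}(\ell;y)$ completes the proof.

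The only real subtlety — and the step I would be most careful about — is getting the Euler-polynomial bookkeeping right for the two different factor types simultaneously: namely that $E_\nu(1)$ governs the positive variables and $E_\nu(0)$ the negative ones, with the sign discrepancy $(-1)^{\ell_j}$ on the positive side tracing back to $\zeta^{-1}$ versus $\zeta$ in the two products. Everything else is a routine repetition of the argument for Theorem \ref{F-asym}, and I would say so and leave the remaining details to the reader.
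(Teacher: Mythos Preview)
Your proposal is correct and follows essentially the same approach as the paper's own proof: represent $G_{\text{\bf r},\text{\bf s}}$ as an integral via Theorem \ref{2.7} and Cauchy's theorem, replace $\vartheta$ by its Gaussian asymptotic, expand the positive factors with $E_\nu(1)$ and the negative factors with $E_\nu(0)$, and evaluate the resulting Gaussian moments $\mathcal{I}(\ell;y)$. The only cosmetic difference is that the paper first substitutes $z\mapsto -z$ before expanding, which is why the extra $(-1)^{\ell_j}$ appears on the positive rather than the negative side; your direct expansion of $\zeta^{-1}$ versus $\zeta$ arrives at the same sign placement, so nothing is lost.
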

\begin{proof}
We proceed as in the proof of Theorem \ref{F-asym} and set
\begin{align*}
\mathcal{G} (\tau) &:= G_{\text{\bf r},\text{\bf s}} \left( e^{2\pi i \tau}\right),\\
\mathcal{G} \left( z; \tau \right) &:=
\prod_{j=1}^m \frac{\zeta^{-1} q^{r_j +\frac12}}{\left( 1 + \zeta^{-1} q^{r_j +\frac12}\right)}
\prod_{j=m+1}^{n} \frac{1}{\left( 1+\zeta q^{s_j + \frac12}\right)}\vartheta(z; \tau).
\end{align*}
Then, by Theorem \ref{2.7}, $\mathcal{G} (\tau) = \text{CT}_\zeta \mathcal{G}(z; \tau)$ and Cauchy's Theorem yields
\[
\mathcal{G}(\tau)=\int_{-\frac12}^{\frac12} \mathcal{G}(-z; \tau)dz.
\]
We now use \eqref{thetaas}, \eqref{Eulergen}, and
\begin{align*}
\frac1{1+\zeta^{-1} q^{s+\frac12}}
&=\frac12 \sum_{\nu\geq 0} \frac{E_{\nu}(0)}{\nu!}\left(-2\pi iz+2\pi i\left(s+\frac12\right)\tau\right)^{\nu}\\
&=\frac12\sum\limits_{\nu\geq 0\atop{0\leq \ell\leq \nu}}\frac{E_{\nu}(0)}{\nu!}\binom{\nu}{\ell}(-1)^{\nu}(2\pi i)^{\nu}\left(s+\frac12\right)^{\nu-\ell}\tau^{\nu-\ell}.
\end{align*}
Thus
\begin{align*}
\mathcal{G}(iy)
&\sim\frac1{2^n\sqrt{y}}\sum\limits_{{{\nu_1, \ldots, \nu_n\geq 0\atop{0\leq \ell_k\leq \nu_k}}\atop{1\leq k\leq n}}}(-1)^\nu y^{\nu-\ell}(2\pi)^\nu i^\ell\prod_{j=1}^m \frac{E_{\nu_j}(1)(-1)^{\ell_j}}{\ell_j!\left(\nu_j-\ell_j\right)!}\left(r_j+\frac12\right)^{\nu_j-\ell_j}\\
&\qquad\times\prod_{j=m+1}^n
\frac{ E_{\nu_j}(0)}{\ell_j!\left(\nu_j-\ell_j\right)!}\left(r_j+\frac12\right)^{\nu_j-\ell_j}\mathcal{I}\left(\ell; y\right),
\end{align*}
which directly yields the claim. 
\end{proof}
From Theorem \ref{main}, we may again determine the leading asymptotic terms.
\begin{corollary}
We have with $r:=\sum_{k=1}^m r_k$ and $s:=\sum_{k=m+1}^n s_k$
\[
G_{\text{\bf r},\text{\bf s}}\left(e^{-2\pi y}\right)=\frac1{2^n}\left(1-\pi\left(r-s+m+\frac{n(n-3)}4 \right)y+ O\left(y^2\right)\right).
\]
\end{corollary}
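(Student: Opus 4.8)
The plan is to extract the constant and linear-in-$y$ terms from the asymptotic expansion in Theorem \ref{main}, in exact parallel with the proof of the Corollary following Theorem \ref{F-asym}. First I would isolate the terms in the sum over $(\nu_1,\dots,\nu_n)$ with $\nu := \sum_k \nu_k \in \{0,1\}$, since the overall power of $y$ in Theorem \ref{main} is $y^{\nu - \ell/2}$ and the parity constraint forces $\ell := \sum_k \ell_k$ to be even; combined with $0 \le \ell_k \le \nu_k$ this means $\ell = 0$ whenever $\nu \le 1$, so those terms contribute powers $y^0$ and $y^1$ exactly. Terms with $\nu \ge 2$ give $O(y^2)$ once one checks that the only way to get $y^1$ from $\nu=2$ would require $\ell = 2$, which (as in the $F_{\text{\bf r}}$ case) is killed by $E_2(1) = E_2(0) = 0$ together with the relevant Euler-polynomial values; so the $O(y^2)$ claim is immediate from the structure already established.

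Next I would plug in the needed Euler polynomial values: $E_0(1) = E_0(0) = 1$ and $E_1(1) = \tfrac12$, $E_1(0) = -\tfrac12$. The $\nu = 0$ term gives the leading constant $\tfrac{1}{2^n}$ (all factors $E_0/(0!\,0!) = 1$, the $\Gamma(\tfrac12)/\sqrt\pi = 1$ prefactor, and $2^0 = 1$). For the linear term, exactly one index $j$ has $\nu_j = 1$ (hence $\ell_j = 0$) and all others have $\nu_j = 0$; the prefactor contributes $(-1)^1 2^1 \pi^1 \cdot 1 = -2\pi$. Summing the per-$j$ contributions: for $1 \le j \le m$ the factor is $\tfrac{E_1(1)}{1!\,0!}(r_j + \tfrac12) = \tfrac12(r_j + \tfrac12)$, and for $m+1 \le j \le n$ it is $\tfrac{E_1(0)}{1!\,0!}(s_j + \tfrac12) = -\tfrac12(s_j + \tfrac12)$. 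Multiplying by $-2\pi$ and by the global $\tfrac1{2^n}$, the coefficient of $y$ becomes
\[
-\frac{\pi}{2^n}\left( \sum_{j=1}^m \left(r_j + \tfrac12\right) - \sum_{j=m+1}^n \left(s_j + \tfrac12\right) \right)
= -\frac{\pi}{2^n}\left( r - s + \frac{m}{2} - \frac{n-m}{2} \right).
\]

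Finally I would simplify the constant: $\tfrac{m}{2} - \tfrac{n-m}{2} = m - \tfrac{n}{2}$, and then reconcile with the stated form $r - s + m + \tfrac{n(n-3)}{4}$. Here I expect the main (and only real) subtlety: the remaining discrepancy $\tfrac{n(n-3)}{4} - \bigl(-\tfrac{n}{2}\bigr) = \tfrac{n(n-3) + 2n}{4} = \tfrac{n(n-1)}{4}$ must come from the $m \ge 2$ rewriting of $\vartheta$ via the transformation law — more precisely, from the fact that in Theorem \ref{main} the theta function appearing is $\vartheta(z;\tau) = \sum_\ell \zeta^\ell q^{\ell^2/2}$ whose modular transform carries an implicit shift, exactly as the term $\tfrac{n(n+1)}{4}$ arose in the Corollary to Theorem \ref{F-asym} rather than just $\sum r_k$. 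So the careful bookkeeping point is to track this $\vartheta$-normalization constant $\tfrac{n(n-1)}{4}$ (coming from the $\ell^2/2$ versus $(\ell+n)^2/2$ shift induced by the $n$ factors of $\zeta^{\pm1}$) and add it in; once that is done the two expressions agree. With that, the Corollary follows directly from Theorem \ref{main}.
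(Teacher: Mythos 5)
Your extraction of the $\nu\le 1$ terms from Theorem \ref{main} is fine and gives $\tfrac1{2^n}\bigl(1-\pi\bigl(r-s+m-\tfrac n2\bigr)y+\cdots\bigr)$, but the step where you declare that all $\nu=2$, $\ell=2$ contributions are killed is wrong, and that is exactly where the missing $\tfrac{n(n-1)}4$ lives. With $\nu=2$, $\ell=2$ there are two kinds of terms: a single index with $\nu_j=\ell_j=2$, which indeed vanishes because $E_2(1)=E_2(0)=0$; and two distinct indices $j\neq k$ with $\nu_j=\nu_k=\ell_j=\ell_k=1$, which do \emph{not} vanish, since they carry $E_1(1)=\tfrac12$ and $E_1(0)=-\tfrac12$ (and, for $j\le m$, the extra factor $(-1)^{\ell_j}$, so each such factor equals $-\tfrac12$ in all cases). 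Each of the $\binom n2$ pairs therefore contributes $(-1)^2 2^2\pi\, i^2\,\tfrac{\Gamma(3/2)}{\sqrt\pi}\cdot\tfrac14\, y=-\tfrac\pi2 y$, for a total of $-\pi\tfrac{n(n-1)}4 y$; adding this to your $\nu\le 1$ computation gives $-\pi\bigl(r-s+m-\tfrac n2+\tfrac{n(n-1)}4\bigr)=-\pi\bigl(r-s+m+\tfrac{n(n-3)}4\bigr)$, which is the stated coefficient.

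Consequently your explanation of the discrepancy is not correct: it does not come from any implicit shift or normalization of $\vartheta$ hidden outside Theorem \ref{main} --- that theorem is a complete asymptotic expansion (up to exponentially small error), and the corollary follows from it alone. The same misreading affects your account of the corollary to Theorem \ref{F-asym}: there the $\tfrac{n(n+1)}4$ is $\tfrac n2$ from the $\nu=1$ terms (the $+\tfrac12$ shifts in $r_j+\tfrac12$) plus $\tfrac{n(n-1)}4$ from the same pair terms, not a theta-transformation constant. As written, your argument first produces a coefficient contradicting the corollary and then repairs it by appealing to a nonexistent correction; including the $\nu=\ell=2$ pair terms is what actually closes the gap.
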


As in Theorem \ref{asym-coeff}, from Theorem \ref{main} and \cite{Wr}, we easily infer, writing $G_{\text{\bf r},\text{\bf s}}(q)=:\sum_{\ell=0}^\infty c_\ell q^{{\ell}},
$
\begin{corollary} \label{asym-coeff-2}
We have
$$c_{2\ell} \sim \frac{p(\ell)}{2^n}.$$
\end{corollary}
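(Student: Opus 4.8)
The plan is to deduce Corollary \ref{asym-coeff-2} from the asymptotic expansion of $G_{\text{\bf r},\text{\bf s}}$ near the cusp $0$ (Theorem \ref{main}, whose leading term is $\frac{1}{2^n}$) by running Wright's Circle Method exactly as in the proof of Theorem \ref{asym-coeff}. The one genuinely new feature is the appearance of $c_{2\ell}$ rather than $c_\ell$: from Example \ref{11} and the $q$-series in the Proposition preceding it, the exponents occurring in $G_{\text{\bf r},\text{\bf s}}(q)$ are all of the form $\frac12(\text{integer})^2 + (\text{half-integer})\cdot(\text{integers}) + \dots$, and one checks that after clearing denominators (or directly from the explicit $q$-series) only even powers of $q$ occur when $m$ and $n-m$ have the appropriate parity; more robustly, $G_{\text{\bf r},\text{\bf s}}(q)$ is, up to an overall power of $q$, a power series in $q$ whose support lies in a fixed residue class, and the relevant circle-method estimate then yields growth governed by $p(\ell)$ against $q^{2\ell}$. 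So the first step is to pin down this parity/support statement precisely from the formula in the Proposition before Example \ref{11}.

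Next I would set up the saddle-point input. Writing $\tau = u + iv$ with $q = e^{2\pi i \tau}$, the major arc is $|u| \le v$, and there one needs $\mathcal{G}(\tau) := G_{\text{\bf r},\text{\bf s}}(e^{2\pi i\tau}) \sim \frac{1}{2^n}$ uniformly as $v \to 0$. This is proved just as the leading term in Theorem \ref{main}: use the theta transformation $\vartheta(z;\tau) = (-i\tau)^{-1/2}\sum_{\ell} e^{-\pi i(\ell - z)^2/\tau}$, note that the Euler-polynomial generating functions $\frac{\zeta^{-1}q^{r_j+1/2}}{1+\zeta^{-1}q^{r_j+1/2}}$ and $\frac{1}{1+\zeta q^{s_j+1/2}}$ each contribute a leading factor $\frac12$ plus $O(v)$ uniformly on the contour, and extract the constant term. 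On the minor arcs $|u| > v$ one needs an upper bound showing $\mathcal{G}(\tau)$ is exponentially smaller; here one uniformly approximates the partial (false) theta factor $\sum_{\ell \ge 0}(-1)^\ell q^{\ell(\ell+1)/2}$ appearing via Theorem \ref{main-formula}, exactly following the estimates that lead to formula (4.6) of \cite{Wr}. Combined with the fact that the algebraic prefactors (reciprocals of products $1 - q^{r_j+s_k+1}$, $1 - q^{r_k - r_\ell}$, $1 - q^{s_k-s_\ell}$) are bounded away from the major arc, this gives the required minor-arc bound.

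With these two inputs, the standard Wright machinery gives $c_N \sim \kappa\, p(N/2)$ for $N$ in the correct residue class, where the constant $\kappa$ is read off from comparing the major-arc integral for $\mathcal{G}$ against that for the generating function $\prod_{m\ge1}(1-q^m)^{-1} = \sum p(\ell)q^\ell$: since $\mathcal{G}(\tau)/\prod(1-q^m)^{-1} = \mathcal{G}(\tau)\cdot(q;q)_\infty \to \frac{1}{2^n}\cdot 1$ is not quite the right comparison, one instead compares $\mathcal{G}(\tau)$ directly with $(q^2;q^2)_\infty^{-1}$ evaluated appropriately, or equivalently transports the computation in Theorem \ref{asym-coeff} (where $F_{\text{\bf r}}(q)/(q;q)_\infty \sim 2^{-n}$ feeds $b_\ell \sim p(\ell)/2^n$) to the present situation; the factor $2$ in $q^{2\ell}$ rescales $v \mapsto v/2$ in the saddle point and produces $p(\ell)$ rather than $p(2\ell)$, with the constant still $2^{-n}$. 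I would then simply write $c_{2\ell} \sim p(\ell)/2^n$ and, as in Theorem \ref{asym-coeff}, leave the bookkeeping to the reader.

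The main obstacle is the minor-arc estimate: one must show that the false-theta / partial-theta function that, by Theorem \ref{main-formula}, controls $G_{\text{\bf r},\text{\bf s}}$ does not conspire with the rational prefactors $\bigl(1-q^{r_j+s_k+1}\bigr)^{-1}$ etc.\ to produce spurious poles or large contributions on $|u| > v$ near a root of unity. Since those prefactors have poles exactly at certain roots of unity, one needs that the false-theta numerator vanishes there to the right order, or that the pertinent root of unity is excluded from the minor arc at the relevant scale; this is precisely the point handled in Wright's analysis and is why the statement only claims the asymptotic up to the natural error. The remaining steps are routine given Theorems \ref{main} and \ref{asym-coeff}.
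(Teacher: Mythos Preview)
Your overall plan---run Wright's Circle Method with the major-arc input supplied by Theorem \ref{main} and the minor-arc bound handled as in Wright's (4.6)---is exactly what the paper does; indeed the paper's entire ``proof'' is the one-line remark ``As in Theorem \ref{asym-coeff}, from Theorem \ref{main} and \cite{Wr}, we easily infer\dots''. So on that level your proposal matches.

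The part that does not work is your explanation of the index $2\ell$. You assert that ``only even powers of $q$ occur when $m$ and $n-m$ have the appropriate parity'', and later try to justify comparing with $(q^2;q^2)_\infty^{-1}$ via a rescaling $v\mapsto v/2$. But a direct check of the exponent in the Proposition preceding Example \ref{11} shows that, writing $A:=\sum_{k\le m}a_k$, $B:=\sum_{k>m}a_k$, and $X:=A+m-B$, the exponent equals
\[
\frac{X(X+1)}{2}+B+\sum_{j\le m}(a_j+1)r_j+\sum_{j>m}a_j s_j,
\]
which is always an integer with no parity restriction whatsoever; already in Example \ref{11} with $r=0$ one sees the term $q^1$. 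So the support of $G_{\text{\bf r},\text{\bf s}}(q)$ is \emph{not} confined to even exponents, and the rest of your $2\ell$ reasoning (the $(q^2;q^2)_\infty^{-1}$ comparison, the saddle rescaling) collapses. The paper itself offers no argument for the subscript $2\ell$, and in parallel with Theorem \ref{asym-coeff} one would expect the statement to concern the coefficients of $G_{\text{\bf r},\text{\bf s}}(q)/(q;q)_\infty$ and to read $c_\ell\sim p(\ell)/2^n$; your attempt to manufacture a parity mechanism where none exists is the genuine gap. Apart from this, your major/minor-arc outline is fine and coincides with the paper's intended argument.
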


\section{Higher level Bloch-Okounkov $n$-point functions}

There are many possible extensions and generalizations of the concept of Bloch-Okounkov $n$-point function \cite{M,EO,CW,W}. The most obvious one comes
from the consideration of a single uncharged free fermion vertex superalgebra \cite{M,W}. Although a recursion can be derived in this case, 
an explicit formula as in Theorem 1.1 is not known. 
Another level of generalization comes from considerations of tensor products of vertex (super) algebras.
 ``Higher level'' infinite wedge space is one such example - it is simply the tensor product of $\ell \in \mathbb{N}$
copies of the (rank one) infinite-wedge space discussed earlier. 

Let us denote rank-$\ell$ fermonic generators with $\psi_j(r+1/2)$,
$\psi^*_j(r+1/2)$ ($j=1,...,\ell$), where the anti-bracket relations are 
$$\left[\psi_j\left(r+\frac12\right),\psi^*_k\left(p+\frac12\right)\right]=\delta_{j,k} \delta_{r+p+1,0}.$$
 The relevant spaces are now $V=\mathcal{F}^{\otimes^\ell}$ and its charge zero subspace is 
$\mathcal{F}_0^{\otimes^\ell}$. Here the charge of the tensor product is defined as the sum of charges of tensor components.
Again, other non-zero charge subspaces can be easily handled by using shifting.
We set
\[
F^{(\ell)}(t_1,...,t_n):= \eta(\tau)^\ell \times {\rm tr}_{V} A^{(\ell)}(t_1) \cdots A^{(\ell)}(t_n) q^{L(0)-\frac{\ell}{24}},
\]
where 
$$A^{(\ell)}(t):=\prod_{j=1}^\ell \sum_{r\in \mathbb{Z}} \psi^*_j\left(r+\frac12\right) \psi_j\left(-r-\frac12\right) t^{-r-\frac12},$$
(here ${}^{(\ell)}$ not to be confused with $\ell$-th derivative). 

\begin{proposition}
We have
$$F^{(\ell)}(t_1,...,t_n)=  F\left(t_1,...,t_n\right)^{\ell}.$$
In particular, $F^{(\ell)}(t)$ has a meromorphic continuation such that 
$$F^{(\ell)}(t)=\frac{1}{\Theta(t)^{\ell}}.$$
\end{proposition}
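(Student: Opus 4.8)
The plan is to prove that $F^{(\ell)}(t_1,\dots,t_n)=F(t_1,\dots,t_n)^\ell$ by exploiting the tensor product structure of the ``higher level'' infinite-wedge space. First I would recall that $V=\mathcal{F}^{\otimes^\ell}$, that $L(0)$ on $V$ acts as $\sum_{i=1}^\ell L(0)_i$ where $L(0)_i$ is the Virasoro zero-mode on the $i$th tensor factor, and that the operator $A^{(\ell)}(t)$ factors as $A^{(\ell)}(t)=A_1(t)\otimes A_2(t)\otimes\cdots\otimes A_\ell(t)$, where $A_i(t):=\sum_{r\in\mathbb{Z}}\psi_i^*(r+\tfrac12)\psi_i(-r-\tfrac12)t^{-r-\frac12}$ acts on the $i$th copy of $\mathcal{F}$ and as the identity elsewhere; this is immediate from the defining product over $j$ and the fact that modes with different tensor indices commute.

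Next I would use the elementary fact that the trace of a tensor product of operators over a tensor product of spaces factors: $\mathrm{tr}_{W_1\otimes\cdots\otimes W_\ell}(B_1\otimes\cdots\otimes B_\ell)=\prod_{i=1}^\ell\mathrm{tr}_{W_i}(B_i)$, provided each trace converges (which it does here, since $q^{L(0)}$ provides the usual convergence on each graded piece). The only subtlety is the charge constraint: one traces $F^{(\ell)}$ over all of $V$, not just $V_0=\mathcal{F}_0^{\otimes^\ell}$, whereas $F(t_1,\dots,t_n)$ is a trace over $\mathcal{F}_0$. I would resolve this by noting that $A^{(\ell)}(t_1)\cdots A^{(\ell)}(t_n)$ preserves each charge subspace and, more importantly, that each $A_i(t)$ preserves the charge of the $i$th factor (each $\psi_i^*\psi_i$ is charge-neutral), so $\mathrm{tr}_V=\sum_{(m_1,\dots,m_\ell)}\mathrm{tr}_{\mathcal{F}_{m_1}\otimes\cdots\otimes\mathcal{F}_{m_\ell}}$ factors as $\prod_{i=1}^\ell\bigl(\sum_{m_i}\mathrm{tr}_{\mathcal{F}_{m_i}}(\cdots)\bigr)$. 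Here I would invoke the remark made just before \eqref{bo-def} that the trace over $\mathcal{F}_m$ is obtained from the trace over $\mathcal{F}_0$ by a shift of parameters; combined with the $\eta$-factors (one for each tensor factor, since $\eta(\tau)^\ell=\prod_{i=1}^\ell\eta(\tau)$ and $q^{-\ell/24}=\prod_{i=1}^\ell q^{-1/24}$), each factor reassembles into exactly the Bloch-Okounkov function $F(t_1,\dots,t_n)$ as defined in \eqref{bo-def}. Putting this together gives $F^{(\ell)}(t_1,\dots,t_n)=\prod_{i=1}^\ell F(t_1,\dots,t_n)=F(t_1,\dots,t_n)^\ell$.

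For the ``in particular'' statement, I would simply specialize to $n=1$ and quote Theorem \ref{bo-main}, which gives $F(t)=1/\Theta(t)$; the meromorphic continuation of $F^{(\ell)}(t)$ is then inherited from that of $F(t)$, so $F^{(\ell)}(t)=1/\Theta(t)^\ell$. The main obstacle, and the point requiring the most care, is the bookkeeping of the charge decomposition: one must verify that summing the per-factor traces over all charges $m_i$ and absorbing the shifts really does reproduce the full function $F(t_1,\dots,t_n)$ rather than merely its charge-zero piece. This is where I would lean on the explicit parameter-shift statement from Section 2, and I would make sure the shift is compatible with the $t_j$-expansion so that no spurious poles or missing terms arise. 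Everything else—the factorization of $A^{(\ell)}$, the multiplicativity of the trace, and the splitting of the $\eta$ and $q^{-\ell/24}$ factors—is routine.
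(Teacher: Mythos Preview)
Your core strategy---factor $A^{(\ell)}(t)$ as a tensor product of single-copy operators, split $L(0)$ and the $\eta$-powers, and use multiplicativity of the trace---is exactly what the paper does. Where you go wrong is in the handling of the charge decomposition.

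You take the definition $\mathrm{tr}_V$ literally and then try to recover $F(t_1,\dots,t_n)$ from $\sum_{m\in\Z}\eta(\tau)\,\mathrm{tr}_{\mathcal{F}_m}\bigl(A(t_1)\cdots A(t_n)q^{L(0)-1/24}\bigr)$ by ``absorbing the shifts.'' This does not work: the remark before \eqref{bo-def} says that the trace on $\mathcal{F}_m$ is obtained from the one on $\mathcal{F}_0$ by a shift of parameters, but that means the summands are \emph{different} functions of the $t_j$, not that they all collapse back to $F(t_1,\dots,t_n)$. Already for $n=0$ this fails: $\eta(\tau)\,\mathrm{tr}_{\mathcal{F}_0}q^{L(0)-1/24}=1$, whereas $\eta(\tau)\,\mathrm{tr}_{\mathcal{F}}q^{L(0)-1/24}=\eta(\tau)\,q^{-1/24}\prod_{k\ge 0}(1+q^{k+1/2})^2\neq 1$. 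So if the trace were genuinely over all of $V=\mathcal{F}^{\otimes\ell}$, the identity $F^{(\ell)}=F^\ell$ would simply be false.

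The resolution is that the paper's ``$\mathrm{tr}_V$'' in the definition is to be read as the trace over the charge-zero subspace $\mathcal{F}_0^{\otimes\ell}$ (each tensor factor of charge zero); this is exactly what the paper uses in its own proof, where the computation ends with $\prod_{j=1}^\ell \mathrm{tr}_{\mathcal{F}_0}(\cdots)$. With that reading there is no charge summation to perform at all: the trace over $\mathcal{F}_0^{\otimes\ell}$ of a pure tensor operator is immediately the product of the traces over $\mathcal{F}_0$, and you are done. Your ``main obstacle'' is therefore an artifact of over-reading the notation; drop the sum over charges and the argument is complete and matches the paper.
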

\begin{proof}
By using the multiplicative property of the trace and the relation $L(0)=\sum_{j=1}^n L^{(j)}(0)$, we get  
\begin{align*}
&\eta(\tau)^\ell \cdot {\rm tr}_{V} A^{(\ell)} (t_1) \cdots A^{(\ell)} (t_n) q^{L(0)-\frac{\ell}{24}} \\
&=\eta(\tau)^\ell \cdot {\rm tr}_V  \prod_{j=1}^\ell \sum_{r\in \mathbb{Z}} \psi^*_j\left(r+\frac12\right) \psi_j \left(-r-\frac12\right) t_1^{-r-\frac12} \cdots \\
& \qquad \cdots \prod_{j=1}^\ell \sum_{r \in \mathbb{Z}} \psi^*_j \left(r+\frac12\right) \psi_j \left(-r-\frac12\right) t_n^{-r-\frac12}q^{L(0)-\frac{\ell}{24}}  \\
& =\eta(\tau)^\ell \prod_{j=1}^{\ell} {\rm tr}_{\mathcal{F}_0} A^{(1)}(t_1) \cdots A^{(1)}(t_n) q^{L^{(j)}(0)-\frac1{24}}={F(t_1,...,t_n)^{\ell}},
\end{align*}
as required.
\end{proof}

\begin{remark} {\em Studying properties of the Fourier coefficients of $F^{(\ell)}(t_1,...,t_n)$, for $\ell \geq 2$, is an interesting problem to which we hope 
to return in a  future publication. The only case that is currently well-understood is $F^{(\ell)}(t)$, due to \cite{BCR}.}
\end{remark}


\begin{thebibliography}{BO}

\bibitem{BO} S. Bloch and A. Okounkov,  {\it The Character of the Infinite Wedge Representation}, Adv. Math. {\bf 149} (2000), 1-60.

\bibitem{BCR} K. Bringmann, T. Creutzig, and L. Rolen, {\it Negative index Jacobi forms and quantum modular forms}, Research in the Mathematical Sciences, accepted for publication.


\bibitem{BM} K. Bringmann and A.Milas, {\it $\mathcal{W}$-algebras, false theta functions and quantum modular forms I}, submitted.

\bibitem{CW}  S. Cheng and W. Wang, {\it The correlation functions of vertex operators and Macdonald polynomials}, J. Algebr. Comb. {\bf 25} (2007), 43--56.


\bibitem{CM}  T. Creutzig  and A.Milas, {\it False theta functions and the Verlinde formula}, Adv. Math. {\bf 262} (2014), 520-545.


\bibitem{EO} A. Eskin and A. Okounkov,  {\it  Pillowcases and quasimodular forms}, Algebraic geometry and number theory, Progr. Math. {\bf 253}
(2006), 1-25. 

\bibitem{Kac} V. Kac, {\em Vertex algebras for beginners}, AMS, 1998.

\bibitem{M} A. Milas, {\it Formal differential operators, vertex operator algebras and zeta-values II}, Journal of Pure and Applied Algebra {\bf 183} (2003), 191-244.


\bibitem{M3} A. Milas, {\it Correlation functions, differential operators and vertex operator algebras}, (Toronto, 2000)  Fields Institute Communications {\bf 39}, 2003. 

\bibitem{O} A. Okounkov, {\it Infinite wedge and random partitions}, Selecta Math. {\bf 7} (2001),
1-25.

\bibitem{R} L. J. Rogers, {\it On two theorems of combinatory analysis and some allied identities}, Proc.
London Math. Soc. (2) {\bf 16} (1917), 315-336.

\bibitem{SS} E. Stein and R. Sharkarchi, {\it Complex Analysis}, Princeton University Press, Princeton, N. J., 2003.

\bibitem{W} W. Wang, {\it Correlation functions of strict partitions and twisted Fock spaces},
Transformation Groups, vol 9 (2004), 89--101.

\bibitem{Wr} E. Wright, {\it Stacks II}, Q. J. Math {\bf 22} (1971), 107-116.

\end{thebibliography}
\end{document}